\numberwithin{equation}{section}
\newtheorem{Theorem}{Theorem}[section]
\newtheorem{Lemma}[Theorem]{Lemma}
\newtheorem{Proposition}[Theorem]{Proposition}
\DeclareMathOperator{\Bal}{Bal}
\DeclareMathOperator{\supp}{supp}
\begin{document}


\renewcommand{\thefootnote}{$\star$}

\newcommand{\arXivNumber}{1604.03070}

\renewcommand{\PaperNumber}{065}

\FirstPageHeading

\ShortArticleName{A Vector Equilibrium Problem for Muttalib--Borodin Biorthogonal Ensembles}

\ArticleName{A Vector Equilibrium Problem\\ for Muttalib--Borodin Biorthogonal Ensembles\footnote{This paper is a~contribution to the Special Issue
on Asymptotics and Universality in Random Matrices, Random Growth Processes, Integrable Systems and Statistical Physics in honor of Percy Deift and Craig Tracy. The full collection is available at \href{http://www.emis.de/journals/SIGMA/Deift-Tracy.html}{http://www.emis.de/journals/SIGMA/Deift-Tracy.html}}}

\Author{Arno B.J.~KUIJLAARS}

\AuthorNameForHeading{A.B.J.~Kuijlaars}

\Address{Katholieke Universiteit Leuven, Department of Mathematics,\\ Celestijnenlaan 200B box 2400, BE-3001 Leuven, Belgium}
\Email{\href{mailto:arno.kuijlaars@kuleuven.be}{arno.kuijlaars@kuleuven.be}}
\URLaddress{\url{http://perswww.kuleuven.be/~u0017946/}}

\ArticleDates{Received April 12, 2016, in f\/inal form July 03, 2016; Published online July 05, 2016}

\Abstract{The Muttalib--Borodin biorthogonal ensemble is a joint density function for $n$ particles on the positive real line that depends on a~parameter~$\theta$. There is an equilibrium problem that describes the large~$n$ 	behavior. We show that for rational values of $\theta$ there is an equivalent vector equilibrium problem.}

\Keywords{biorthogonal ensembles; vector equilibrium problem; random matrix theory; logarithmic potential theory}

\Classification{31A05; 60B20}

\begin{flushright}
\begin{minipage}{70mm}
\it Dedicated to Percy Deift and Craig Tracy\\ on the occasion of their 70th birthdays
\end{minipage}
\end{flushright}

\renewcommand{\thefootnote}{\arabic{footnote}}
\setcounter{footnote}{0}

\section{Introduction and statement of results}\label{section1}

\subsection{The Muttalib--Borodin ensemble}\label{section1.1}

The Muttalib--Borodin biorthogonal ensemble is the following probability density function for $n$ particles on the half line $[0, \infty)$
\begin{gather} \label{eq:Bor1}
	\frac{1}{Z_n} \prod_{j < k} (x_k-x_j) \prod_{j < k} \big(x_k^{\theta} - x_j^{\theta}\big) \prod_{j=1}^n e^{-nV(x_j)},
	\qquad x_j \geq 0, \end{gather}
with $\theta > 0$ and with an $n$-dependent weight function $w(x) = e^{-nV(x)}$ having enough decay at inf\/inity. The model is named after Muttalib~\cite{Mut95} who introduced it as a simplif\/ied model for disordered conductors in the metallic regime, and Borodin~\cite{Bor99} who obtained profound mathematical results, in particular for Laguerre and Jacobi weights.

The model has attracted considerable attention in recent years. Random matrix models whose eigenvalues (or singular values) have the distribution~\eqref{eq:Bor1} were recently given in~\cite{Che14, ForWan15}. The model is also related to products of random matrices~\cite{ForWan15, KuiSti14}.

In the large $n$ limit, the particles have an almost sure limiting measure $\mu^*$ which is the minimizer of
\begin{gather} \label{eq:CRfunctional}
	\frac{1}{2} \iint \log \frac{1}{|x-y|} d\mu(x) d\mu(y) +
	\frac{1}{2} \iint \log \frac{1}{|x^{\theta} - y^{\theta}|} d\mu(x) d\mu(y)
		+ \int V(x) d\mu(x) \end{gather}
among all probability measures $\mu$ on $[0,\infty)$. This follows from large deviation results for \eqref{eq:Bor1} and related models that were studied in \cite{BlLeToWi15, But16, EiSoSt11}. For $\theta = 1$, the functional~\eqref{eq:CRfunctional} reduces to the usual energy in the presence of an external f\/ield \cite{SaTo97}.

The minimizer for \eqref{eq:CRfunctional} was studied in detail by Claeys and Romano~\cite{ClaRom13}. They found suf\/f\/icient conditions for the minimizer to be supported on an interval $[0,a]$ for some $a > 0$. Forrester and co-authors~\cite{ForLiu14, FoLiZJ15} analyzed the equilibrium problem for~\eqref{eq:CRfunctional} with special potentials, and found expressions for the minimizers as Fuss--Catalan and Raney distributions, see also~\cite{NeuVAs16}.

It is the aim of this paper to show that for rational values of $\theta$, say $\theta = q/r$ with $q,r \in \mathbb N$, there is an equivalent minimization problem for a vector of $q+r-1$ measures. We expect that the vector equilibrium problem will be useful for subsequent asymptotic analysis. The special role of rational $\theta$ also appeared in the already mentioned work~\cite{ClaRom13}. This paper gives f\/inite term recurrence relations for the biorthogonal polynomials associated with~\eqref{eq:Bor1}, as well as a~Christof\/fel--Darboux formula for the correlation kernel are given, but only for rational~$\theta$.

In order to state our results we introduce the logarithmic energy
\begin{gather} \label{eq:logenergy}
	I(\mu) = \iint \log \frac{1}{|x-y|} d \mu(x) d\mu(y)
	\end{gather}
	of a measure $\mu$ and the mutual energy
\begin{gather} \label{eq:mutualenergy}
	I(\mu,\nu) = \iint \log \frac{1}{|x-y|} d \mu(x) d\nu(y)
	\end{gather}
of two measures $\mu$ and $\nu$. Throughout we use for $j \in \mathbb Z$,
\begin{gather} \label{eq:Deltaj}
	\Delta_j = \begin{cases} [0,\infty), & \text{if $j$ is even}, \\
		(-\infty,0], & \text{if $j$ is odd}.
		\end{cases}
\end{gather}

\subsection[Result for the case $\theta = 1/r$]{Result for the case $\boldsymbol{\theta = 1/r}$}\label{section1.2}

We f\/irst state the result for the case $\theta = 1/r$ with $r \in \mathbb N$.

\begin{Theorem} \label{thm11}
	Let $V\colon [0,\infty) \to \mathbb R$ be continuous and
	\begin{gather*} 
		\lim_{x \to \infty} \big(V(x) - \log\big(1+x^2\big) \big) = + \infty.
	\end{gather*}
	Then there is a unique vector $(\mu_0^*, \mu_1^*, \ldots, \mu_{r-1}^*)$ of~$r$ measures that minimizes the energy functional
	\begin{gather} \label{eq:VEproblem}
	\sum_{j=0}^{r-1} I(\mu_j) - \sum_{j=0}^{r-2} I(\mu_j, \mu_{j+1})
	+ \int V d\mu_0
	\end{gather}
	among all vectors satisfying for every $j=0,\ldots, r-1$,
	\begin{enumerate}\itemsep=0pt
	\item[$i)$] $\supp(\mu_j) \subset \Delta_j$,
	\item[$ii)$] $\mu_j(\Delta_j) = 1 - \frac{j}{r}$, and
	\item[$iii)$] $I(\mu_j) < +\infty$.
	\end{enumerate}
	
	The measure $\mu_0^*$ has compact support, and it is the 	unique minimizer of the functional~\eqref{eq:CRfunctional} with $\theta = 1/r$ 	among probability measures on $[0,\infty)$.	
\end{Theorem}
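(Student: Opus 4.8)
The plan is to establish existence and uniqueness of the vector minimizer first, using standard lower semicontinuity and convexity arguments from logarithmic potential theory, and then to identify $\mu_0^*$ with the minimizer of \eqref{eq:CRfunctional} by showing that the two equilibrium problems are equivalent via an explicit transformation that ``unfolds'' the map $x \mapsto x^{1/r}$. For the existence part, I would first check that the quadratic form in \eqref{eq:VEproblem} is positive definite on the relevant space of signed measures with zero total mass, so that the functional is strictly convex; the interaction matrix here is the tridiagonal matrix with $2$'s on the diagonal and $-1$'s on the off-diagonal, which is the Cartan matrix of type $A_r$ and hence positive definite. Combined with the confining hypothesis on $V$, which forces the admissible measures to have uniformly bounded support (or at least tightness), one gets that minimizing sequences are precompact in the weak-$*$ topology, the functional is lower semicontinuous, and the minimizer exists and is unique.

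The heart of the matter is the equivalence with \eqref{eq:CRfunctional}. The key identity is that for $\theta = 1/r$,
\begin{gather*}
\log \frac{1}{\big|x^{1/r} - y^{1/r}\big|}
\end{gather*}
can be rewritten using $x^{1/r} - y^{1/r} = \prod_{k=0}^{r-1}\big(x^{1/r} - \omega^k y^{1/r}\big) \big/ (\text{something})$ — more precisely, since $x - y = \prod_{k=0}^{r-1}\big(x^{1/r} - \omega^k y^{1/r}\big)$ with $\omega = e^{2\pi i/r}$, one has a relation expressing the $\theta$-interaction in \eqref{eq:CRfunctional} in terms of interactions between $\mu_0$ and its images under the maps $x \mapsto \omega^k x^{1/r}$. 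Following the now-standard technique (as in the work on products of random matrices and Fuss--Catalan distributions), I would define, starting from a probability measure $\mu$ on $[0,\infty)$ with density determined by the equilibrium conditions, a cascade of measures $\mu_1, \ldots, \mu_{r-1}$ on the alternating half-lines $\Delta_j$ via pushforwards/balayage of $\mu_0 = \mu$ under appropriate branches of the multivalued algebraic map. The mass conditions $\mu_j(\Delta_j) = 1 - j/r$ should drop out of the construction. Then a direct computation shows that \eqref{eq:VEproblem} evaluated on this vector equals \eqref{eq:CRfunctional} evaluated on $\mu$, up to an additive constant independent of $\mu$. Conversely, given the vector minimizer, its variational (Euler--Lagrange) conditions — equalities and inequalities for the logarithmic potentials on each $\Delta_j$ — can be shown to be equivalent to the variational condition for \eqref{eq:CRfunctional}, again by the same algebraic substitution, which pins down $\mu_0^*$ as the sought minimizer.

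I expect the main obstacle to be the bookkeeping in the second part: correctly setting up the chain of measures so that supports land on the correct half-lines $\Delta_j$, verifying that the transformation is a bijection between admissible configurations for the two problems, and checking that the energy functionals match term-by-term (the cross terms $-I(\mu_j,\mu_{j+1})$ must conspire with the self-energies $I(\mu_j)$ to reproduce exactly the $\tfrac12 I(\mu) + \tfrac12 \iint \log|x^\theta - y^\theta|^{-1}$ structure). A secondary technical point is justifying that $\mu_0^*$ has compact support: this should follow either from transporting the known compact-support result of Claeys--Romano \cite{ClaRom13} through the equivalence, or directly from the growth hypothesis on $V$ together with the Euler--Lagrange inequality for $\mu_0^*$. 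Finiteness of the logarithmic energies throughout the construction needs a brief argument, using that the constructed densities are sufficiently regular away from the origin and have at worst mild (integrable-squared) singularities at $0$.
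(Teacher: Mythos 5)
Your proposal has the right high-level intuition (unfold $x\mapsto x^{1/r}$, build a chain of measures on alternating half-lines, show the vector energy collapses to the Claeys--Romano functional), but two of its load-bearing steps have genuine gaps.

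First, the existence/uniqueness argument is flawed. You claim that the growth hypothesis on $V$ forces the admissible measures to have uniformly bounded support, or at least tightness. This is false here: the measures $\mu_1,\ldots,\mu_{r-1}$ carry no external field and live on the unbounded sets $\Delta_j$, and in fact the minimizers $\mu_j^*$ for $j\geq1$ have \emph{full} unbounded support $\supp(\mu_j^*)=\Delta_j$. The functional \eqref{eq:VEproblem} is not even well-defined a priori on all admissible vectors (mutual energies of measures on unbounded sets can diverge), which is precisely why the problem is only \emph{weakly} admissible. The paper handles this by invoking the Hardy--Kuijlaars regularization: map to the Riemann sphere, replace $I(\cdot,\cdot)$ by the spherical energy $\tilde I(\cdot,\cdot)$, modify $V$ to $\tilde V$, and use the result of~\cite{HaKu12} that $\tilde J$ has compact sublevel sets and is strictly convex. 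Your standard weak-$*$ compactness argument does not go through without this framework, and this is a real gap, not a technicality.

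Second, the ``equivalence'' mechanism is different from, and less precise than, what is actually needed. The factorization $x-y=\prod_{k=0}^{r-1}\bigl(x^{1/r}-\omega^k y^{1/r}\bigr)$ lives on the rays $\omega^k\mathbb{R}_+$, not on the alternating half-lines $\Delta_j\in\{[0,\infty),(-\infty,0]\}$, so it does not directly produce the chain of measures you need. What the paper actually proves is a \emph{partial minimization} identity: fix a compactly supported probability measure $\mu$, minimize the energy over the remaining components $(\mu_1,\ldots,\mu_{r-1})$ subject to the mass and support constraints, and show the minimum equals
\begin{gather*}
-\frac{1}{2}\iint\log\left|\frac{x^{1/r}-y^{1/r}}{x-y}\right|d\mu(x)\,d\mu(y).
\end{gather*}
Substituting this back into \eqref{eq:VEproblem} reproduces \eqref{eq:CRfunctional} exactly. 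The explicit construction of the partial minimizer goes via a genus-zero Riemann surface for $w=z^{1/r}$ with $r$ sheets glued along $\Delta_1,\ldots,\Delta_{r-1}$, the rational function $\Psi(w)=\bigl(rw^{r-1}(w-a^{1/r})\bigr)^{-1}$, and jump discontinuities across the cuts giving the densities of $\mu_j^*$; verification uses contour integration to compute masses, Sokhotskii--Plemelj to obtain the balayage relations $2U^{\mu_j^*}=U^{\mu_{j-1}^*}+U^{\mu_{j+1}^*}$ on $\Delta_j$, and a variational lemma (analogous to Lemma~\ref{lem21}) certifying the minimizer. Your sketch of ``pushforwards/balayage under branches of the multivalued map'' points in this direction but does not pin it down, and in particular omits the crucial fact that for $j\neq0$ the $\mu_j^*$ are determined by iterated balayage and that the minimum value of the reduced problem is exactly the difference between \eqref{eq:CRfunctional} and the $\theta=1$ energy of $\mu_0$. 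The compact-support argument for $\mu_0^*$ at the end is also more subtle than ``transport the Claeys--Romano result'': the paper isolates the sub-problem for $\nu_0$ with the effective field $\widehat V(x)+\int\log\frac{x-s}{\sqrt{1+s^2}}\,d\rho(s)$ where $\rho=\nu_{-1}^*+\nu_1^*$, and shows that field still grows faster than $\log(1+x^2)$.
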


The minimization problem for the energy functional~\eqref{eq:VEproblem} is an example of a weakly admissible vector equilibrium problem in the sense of Hardy and Kuijlaars, see also below. The other measures $\mu_1^*, \ldots, \mu_{r-1}^*$ have full unbounded support, $\supp(\mu_j^*) = \Delta_j$ for $j=1, \ldots, r-1$. In fact $ \mu_j^* = \frac{1}{2} \Bal\big(\mu_{j-1}^* + \mu_{j+1}^*, \Delta_j \big)$, where~$\Bal$ denotes the balayage onto~$\Delta_j$, see below as well.

\subsection[Result for rational $\theta$]{Result for rational $\boldsymbol{\theta}$}\label{section1.3}

For general rational $\theta = q/r$ with $q,r \in \mathbb N$, we f\/irst make the change of variables $x \mapsto x^{1/q}$, $y \mapsto y^{1/q}$ in the energy functional~\eqref{eq:CRfunctional} to obtain
\begin{gather} \label{eq:CRfunctional2}
\frac{1}{2} \iint\! \log \frac{1}{|x^{1/q}-y^{1/q}|} d\nu(x) d\nu(y) +
\frac{1}{2} \iint\! \log \frac{1}{|x^{1/r} - y^{1/r}|} d\nu(x) d\nu(y)
+ \int \! \widehat{V}(x) d\nu(x),\!\!\!\! \end{gather}
where $d\nu(x) = d\mu(x^{1/q})$, and $\widehat{V}(x) = V(x^{1/q})$. Note that $q$ and $r$ play a symmetric role in the energy functional~\eqref{eq:CRfunctional2}.

\begin{Theorem} \label{thm12}
Let $\widehat{V}\colon [0,\infty) \to \mathbb R$ be continuous and
	\begin{gather} \label{eq:tildeVgrowth}
	\lim_{x \to \infty} \big(\widehat{V}(x) - \log\big(1+x^2\big) \big) = + \infty.
	\end{gather}
Then there is a unique vector $(\nu_{-q+1}^*, \ldots, \nu_{-1}^*, \nu_0^*, \nu_1^*, \ldots,	\nu_{r-1}^*)$ of $q+r-1$ measures that minimizes the energy functional
	\begin{gather} \label{eq:VEproblem2}
	\sum_{j=-q+1}^{r-1} I(\nu_j) - \sum_{j=-q+1}^{r-2} I(\nu_j, \nu_{j+1})
	+ \int \tilde{V} d\nu_0
	\end{gather}
	among all vectors satisfying for every $j=-q+1,\ldots, r-1$,
	\begin{enumerate}\itemsep=0pt
		\item[$i)$] $\supp(\nu_j) \subset \Delta_j$,
		\item[$ii)$] $\nu_j(\Delta_j) = m_j :=
		\begin{cases} 1 - \frac{j}{r} & \text{if }j \geq 0, \vspace{1mm}\\ 1 - \frac{|j|}{q} & \text{if } j \leq 0, \end{cases}$
		\item[$iii)$] $I(\mu_j) < +\infty$.
	\end{enumerate}
	
The measure $\nu_0^*$ has compact support and it is the unique minimizer of the functional~\eqref{eq:CRfunctional2} among probability measures on $[0,\infty)$, and $d\mu_0^*(x) = d\nu_0^*(x^{q})$ 	is the unique minimizer of~\eqref{eq:CRfunctional} with $\theta = q/r$.
\end{Theorem}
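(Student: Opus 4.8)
The strategy is to reduce Theorem~\ref{thm12} to Theorem~\ref{thm11} by exploiting the symmetry of~\eqref{eq:CRfunctional2} in $q$ and $r$, and by a chain of balayage identifications. First I would establish Theorem~\ref{thm11} directly: the functional~\eqref{eq:VEproblem} is a weakly admissible vector equilibrium problem (in the sense of Hardy--Kuijlaars), with interaction matrix the tridiagonal matrix $A = (2\delta_{jk} - \delta_{|j-k|,1})$, which is positive definite. Weak admissibility gives existence and uniqueness of the minimizer $(\mu_0^*,\dots,\mu_{r-1}^*)$; the Euler--Lagrange (variational) conditions read, for each $j$, $2U^{\mu_j^*} - U^{\mu_{j-1}^*} - U^{\mu_{j+1}^*} + \delta_{j0}V = \ell_j$ on $\supp(\mu_j^*)$ and $\geq \ell_j$ on $\Delta_j$, where $U^\mu$ is the logarithmic potential and we set $\mu_{-1}^* = \mu_r^* = 0$. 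For $j \geq 1$ these conditions say precisely that $\mu_j^*$ is the balayage $\frac12\Bal(\mu_{j-1}^* + \mu_{j+1}^*, \Delta_j)$, and solving this recursively from $j = r-1$ down shows each $\mu_j^*$ for $j\ge1$ is a $\mathbb Q$-linear combination of iterated balayages of $\mu_0^*$ onto the half-lines $\Delta_j$.

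The key lemma, which I would prove next, is that after eliminating $\mu_1^*,\dots,\mu_{r-1}^*$ the $j=0$ variational condition becomes exactly the Euler--Lagrange condition for~\eqref{eq:CRfunctional} with $\theta = 1/r$. This rests on the potential-theoretic identity that for a measure $\sigma$ on $[0,\infty)$, the "$r$-th root" interaction $\iint \log|x^{1/r}-y^{1/r}|^{-1}\,d\sigma\,d\sigma$ can be written, via the factorization $x - y = \prod_{k=0}^{r-1}(x^{1/r} - \omega^k y^{1/r})$ together with the symmetry/reflection properties of balayage onto $[0,\infty)$ and $(-\infty,0]$, as a telescoping sum involving $U^{\mu_0^*}$ and its balayages; this is the same algebraic mechanism that produces the off-diagonal terms $-I(\mu_j,\mu_{j+1})$ in~\eqref{eq:VEproblem}. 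Granting this, the minimizer of~\eqref{eq:CRfunctional} with $\theta=1/r$ and the first component $\mu_0^*$ of the vector minimizer satisfy the same variational characterization on $[0,\infty)$, hence coincide by uniqueness for~\eqref{eq:CRfunctional} (which follows from strict convexity of~\eqref{eq:CRfunctional}, itself a consequence of positivity of the two logarithmic-kernel quadratic forms). Compactness of $\supp(\mu_0^*)$ follows from the growth hypothesis on $V$ exactly as in the scalar case.

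For Theorem~\ref{thm12} I would run the analogous argument on~\eqref{eq:CRfunctional2}, but now using the $q$--$r$ symmetry: the negatively-indexed measures $\nu_{-q+1}^*,\dots,\nu_{-1}^*$ handle the $x^{1/q}$ interaction and the positively-indexed ones the $x^{1/r}$ interaction, with $\nu_0^*$ shared. Concretely, one shows that the functional~\eqref{eq:VEproblem2} is again weakly admissible with a tridiagonal positive-definite interaction matrix on $q+r-1$ components, so existence and uniqueness hold; the Euler--Lagrange conditions again force $\nu_j^* = \frac12\Bal(\nu_{j-1}^* + \nu_{j+1}^*,\Delta_j)$ for all $j \neq 0$ (with the end measures $\nu_{-q}^* = \nu_r^* = 0$), and eliminating all $\nu_j^*$ with $j \neq 0$ collapses the $j=0$ condition to the Euler--Lagrange equation for~\eqref{eq:CRfunctional2}. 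The mass constraints $m_j$ are exactly what make the two telescoping chains (one of length $q$, one of length $r$) close up consistently at $j=0$ with total mass $1$. The final claim $d\mu_0^*(x) = d\nu_0^*(x^q)$ is just the change of variables $x \mapsto x^{1/q}$ that produced~\eqref{eq:CRfunctional2} from~\eqref{eq:CRfunctional} in the first place, together with the already-established equivalence for~\eqref{eq:CRfunctional2}.

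**Main obstacle.** The delicate point is the balayage/telescoping identity: one must verify that iterating $\sigma \mapsto \Bal(\sigma, (-\infty,0])$ and $\mapsto \Bal(\sigma, [0,\infty))$ produces, when applied to a measure on $[0,\infty)$ and combined with the appropriate rational coefficients, precisely the potential $\iint\log|x^{1/r}-y^{1/r}|^{-1}$ up to harmless harmonic terms — and that the resulting measures have \emph{finite} logarithmic energy despite having unbounded support, so that condition $iii)$ is consistent and the functionals are well-defined (this is where weak admissibility, rather than classical admissibility, is essential). Keeping track of the behavior at $x = 0$ (where all the half-lines meet and where $x^{1/r}$ is non-smooth) and at $\infty$ in these balayage computations is the technical heart of the argument.
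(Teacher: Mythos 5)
Your overall architecture is sound and close in spirit to the paper's: identify the Nikishin interaction matrix, invoke the Hardy--Kuijlaars weak admissibility theory for existence/uniqueness, recognize the $j\neq 0$ Euler--Lagrange conditions as a balayage chain, exploit the $q$--$r$ symmetry of~\eqref{eq:CRfunctional2}, and reduce to the scalar problem. The route you take to the reduction, however, differs from the paper's, and the hardest step is left as a sketch.

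The paper does \emph{not} match Euler--Lagrange conditions directly. Instead it carries out a two-step minimization at the level of energies: for fixed compactly supported $\nu_0$, it minimizes over the remaining $\nu_j$, which splits into two independent one-sided chains (indices $1,\dots,r-1$ and $-1,\dots,-q+1$), each handled by an auxiliary proposition. That proposition is proved first for $\mu=\delta_a$ by an explicit construction on the genus-zero Riemann surface $z=w^r$, using the rational function $\Psi(w)=1/\big(rw^{r-1}(w-a^{1/r})\big)$: the measures $\mu_j^*$ are read off from the jump of $\Psi_j$ across $\Delta_j$, contour deformation produces the masses $1-j/r$, and an antiderivative computation gives the clean closed form $U^{\mu_1^*}(z)=\log\big|\tfrac{z^{1/r}-a^{1/r}}{z-a}\big|$. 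Linearity in $a$ then extends this to arbitrary compactly supported $\mu$, yielding exactly the identity that converts the chain of off-diagonal terms $-I(\mu_j,\mu_{j+1})$ into $-\tfrac12\iint\log\big|\tfrac{x^{1/r}-y^{1/r}}{x-y}\big|\,d\mu\,d\mu$. Plugging this into the inner minimization shows the reduced functional on $\nu_0$ is precisely~\eqref{eq:CRfunctional2}, which identifies $\nu_0^*$ with the scalar minimizer with no need to manipulate variational inequalities. Your energy-versus-variational-conditions route can in principle close the same way, but both routes funnel through the same hard identity, and you have only gestured at it.

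That gesture is the genuine gap. The factorization $x-y=\prod_{k=0}^{r-1}(x^{1/r}-\omega^k y^{1/r})$ introduces complex points $\omega^k y^{1/r}$ off the real line, and it is not at all immediate how ``the symmetry/reflection properties of balayage onto $[0,\infty)$ and $(-\infty,0]$'' turn those complex factors into a telescoping sum of potentials of real measures supported on the alternating half-lines with the prescribed masses $1-j/r$. You would need to prove that; the paper's Riemann-surface computation is essentially the rigorous substitute for this intuition. Relatedly, the phrase ``solving this recursively from $j=r-1$ down'' is not accurate: the balayage relations $\mu_j^*=\tfrac12\Bal(\mu_{j-1}^*+\mu_{j+1}^*,\Delta_j)$ form a \emph{coupled} tridiagonal system (each $\mu_j^*$ depends on both neighbors, and $\mu_0^*$ is given while $\mu_r^*=0$), so one cannot peel off $\mu_{r-1}^*$ first and march downward; one must solve the whole chain at once, which is exactly what the explicit $\Psi$ does. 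You also do correctly flag the finiteness of $I(\mu_j)$ despite unbounded support as an issue; in the paper this follows from the explicit density decay $O(|x|^{-1-1/r})$ produced by the construction, and your sketch gives no independent way to get it. In summary: correct skeleton, genuinely different (and defensible) reduction strategy, but the central balayage--to--$r$th-root identity and the solvability of the balayage chain need an actual construction, and the cyclotomic factorization alone does not provide one.
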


As in Theorem~\ref{thm11} the other measures $\nu_j^*$ from Theorem~\ref{thm12} have full unbounded support: $\supp(\nu_j^*) = \Delta_j$ if $j \neq 0$.

The energy functional \eqref{eq:VEproblem2} with $\hat{V} = 0$ and the normalizations $m_j$ as in condition ii) appeared in \cite[Theorem~2.3]{DuKu08} where it describes the limiting eigenvalue distribution of banded Toeplitz matrices. The supports $\Delta_j$ of the measures, however, are more general curves in that case.

Theorem \ref{thm11} is the special case $q=1$ of Theorem \ref{thm12} and it is enough to prove the latter theorem. However, for sake of exposition we chose to state Theorem~\ref{thm11} separately as well.

\section{Preliminaries}\label{section2}

\subsection{Vector equilibrium problems}\label{section2.1}

The unique existence of a minimizing vector of measures follows from the result of Hardy and Kuijlaars, which we recall here. The general setup of \cite{HaKu12} involves the following ingredients.
\begin{enumerate}\itemsep=0pt
	\item[(a)] A sequence of closed subsets $\Delta_1, \ldots, \Delta_d$ of the 	complex plane, each with positive logarithmic capacity.
	\item[(b)] For each $i = 1,\ldots, d$, a lower semi-continuous function 	$V_i \colon \Delta_i \to \mathbb R \cup \{+\infty\}$ that is f\/inite on a set of positive capacity.
	\item[(c)] A symmetric positive def\/inite interaction matrix $C = (c_{ij})$ of size $d \times d$.
	\item[(d)] A sequence of positive numbers $m_1, \ldots, m_d$, such that
\begin{gather} \label{eq:liminfVi}
	\liminf_{|x| \to \infty, \, x \in \Delta_i} \left( V_i(x) -
	\left(\sum_{j=1}^d c_{ij} m_j \right) \log \big(1 + |x|^2\big) \right) > -\infty
	\end{gather}
for every $i=1, \ldots, d$ for which $\Delta_i$ is unbounded.
\end{enumerate}

Associated with the above data is the energy functional
\begin{gather} \label{eq:JV}
	J(\mu_1, \ldots, \mu_d) = \sum_{i,j=1}^d c_{ij} I(\mu_i,\mu_j) + \sum_{i=1}^d \int V_i(x) d\mu_i(x).
	\end{gather}
The problem is to minimize $J$ over $\prod\limits_{j=1}^d \mathcal M_{m_j}(\Delta_j)$, or some subset of it. Here we use $\mathcal M_m(\Delta)$ to denote the set of positive Borel measures on $\Delta$ of total mass $m > 0$. Such a minimization problem is called a \textit{weakly admissible vector equilibrium problem}.

The functional~\eqref{eq:JV} is not well-def\/ined on all of $\prod\limits_{j=1}^d \mathcal M_{m_j}(\Delta_j)$, since there is a problem with measures having overlapping supports, and with measures having unbounded supports. The problem with overlapping supports was solved by Beckermann et al.~\cite{BeKaMaWi13}. To handle the situation with unbounded supports, a regularization of $J$ is introduced in \cite[Theorem~2.6]{HaKu12} that comes from mapping the measures to the Riemann sphere and redef\/ining $J$ accordingly. This procedure involves a modif\/ication of the mutual energy \eqref{eq:mutualenergy} and the corresponding logarithmic energy~\eqref{eq:logenergy} to
\begin{gather*} 
	\tilde{I}(\mu,\nu) = \iint \log \left( \frac{\sqrt{1+|x|^2} \sqrt{1+|y|^2}}{|x-y|} \right) d\mu(x) d\nu(y),
\end{gather*}
and $\tilde{I}(\mu) = \tilde{I}(\mu,\mu)$, respectively.

We may call $\tilde{I}(\mu)$ and $\tilde{I}(\mu,\nu)$ the spherical (mutual) energy. Then 	
\begin{gather} \label{eq:JV2}
	\tilde{J}(\mu_1, \ldots, \mu_d)
	= \begin{cases} \displaystyle \sum\limits_{i,j=1}^d c_{ij} \tilde{I}(\mu_i,\mu_j)
		+ \sum\limits_{i=1}^d \int \tilde{V}_i(x) d\mu_i(x),
			& \text{if all } \tilde{I}(\mu_i) < +\infty, \\
		+\infty, & \text{otherwise},
			\end{cases}
	\end{gather}
where
\begin{gather} \label{eq:tildeVi}
	\tilde{V}_i(x) = V_i(x) -
	\left(\sum_{j=1}^d c_{ij} m_j \right) \log\big(1+|x|^2\big)
	\end{gather}
is an extension of \eqref{eq:JV} since for vectors of measures with $I(\mu_i) < +\infty$ for all $i$, we have that $\tilde{J}(\mu_1, \ldots, \mu_d)
= J(\mu_1, \ldots, \mu_d)$. Note that~$\tilde{V}_i$ is bounded away from $-\infty$ on~$\Delta_i$ because of the lower-semicontinuity of $V_i$ and the growth condition \eqref{eq:liminfVi} at inf\/inity.

The functional \eqref{eq:JV2} is thus def\/ined on $\prod\limits_{j=1}^d \mathcal M_{m_j}(\Delta_j)$ with values in $\mathbb R \cup \{+\infty\}$ only. In addition, $\tilde{J}$ has compact sublevel sets $\tilde{J}^{-1}((-\infty, \alpha])$ for any real~$\alpha$ (which implies that it is lower semi-continuous), and it is strictly convex on the set where it is f\/inite. As a consequence, there is a unique minimizer of $\tilde{J}$ on
$\prod\limits_{j=1}^d \mathcal M_{m_j}(\Delta_j)$, see \cite[Corollary~2.7]{HaKu12}.

\subsection{Variational conditions}\label{section2.2}

We use $U^{\nu}(z) = \int \log \frac{1}{|z-y|} d\nu(y)$, $z \in \mathbb C$, to denote the logarithmic potential of a measure~$\nu$ and
\begin{gather*} \tilde{U}^{\nu}(z) = \int \log \left( \frac{\sqrt{1+|z|^2} \sqrt{1+|y|^2}}{|z-y|} \right) d\nu(y) \end{gather*}
to denote the spherical potential.

The variational conditions for the vector equilibrium problem were not discussed in~\cite{HaKu12}. The following result is standard for the case $d=1$, and its extension to $d \geq 2$ is not dif\/f\/icult.

\begin{Lemma} \label{lem21}
 Let $\vec{\mu}^*= (\mu_1^*, \ldots, \mu_d^*)$ be a vector of
	measures in $\prod\limits_{j=1}^d \mathcal M_{m_j}(\Delta_j)$ with $\tilde{I}(\mu_i^*) < +\infty$ 	for all $i$. Suppose that for some constants $\ell_i$,
	\begin{gather} \label{eq:varcond}
		2 \sum_{j=1}^d c_{ij} \tilde{U}^{\mu_j^*} + \tilde{V}_i
	\ 	\begin{cases} = \ell_i & \text{on } \supp(\mu_i^*), \\
		\geq \ell_i & \text{on } \Delta_i,
		\end{cases}
		\end{gather}
	for every $i=1, \ldots, d$.
	Then $\vec{\mu}$ is the minimizer of $\tilde{J}$ in $\prod\limits_{j=1}^d \mathcal M_{m_j}(\Delta_j)$.
\end{Lemma}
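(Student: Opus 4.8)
The plan is to prove that the stated variational conditions are sufficient for $\vec{\mu}^*$ to be the global minimizer by a standard convexity argument, adapted to the spherical (regularized) energy. First I would let $\vec{\nu} = (\nu_1, \ldots, \nu_d)$ be an arbitrary competitor in $\prod_{j=1}^d \mathcal M_{m_j}(\Delta_j)$; we may assume $\tilde J(\vec\nu) < +\infty$, since otherwise there is nothing to prove, and in particular $\tilde I(\nu_i) < +\infty$ for all $i$. For $t \in [0,1]$ set $\vec\mu^{(t)} = (1-t)\vec\mu^* + t\vec\nu$, which stays in $\prod_j \mathcal M_{m_j}(\Delta_j)$ because each $\mathcal M_{m_j}(\Delta_j)$ is convex and the mass normalization is preserved. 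Since $\tilde J$ is convex on the set where it is finite (this was recorded in the excerpt, and follows from positive definiteness of $C$ together with the fact that $\tilde I(\cdot)$ is a quadratic form that is nonnegative on signed measures of total charge zero), it suffices to show that the one-sided derivative $\frac{d}{dt}\big|_{t=0^+} \tilde J(\vec\mu^{(t)}) \geq 0$.

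Next I would compute that derivative. Writing $\sigma_i = \nu_i - \mu_i^*$, a signed measure of total mass $0$ supported on $\Delta_i$, and expanding the quadratic form,
\begin{gather*}
\frac{d}{dt}\Big|_{t=0^+} \tilde J(\vec\mu^{(t)})
= 2 \sum_{i,j=1}^d c_{ij} \tilde I(\mu_j^*, \sigma_i) + \sum_{i=1}^d \int \tilde V_i \, d\sigma_i
= \sum_{i=1}^d \int \left( 2\sum_{j=1}^d c_{ij} \tilde U^{\mu_j^*} + \tilde V_i \right) d\sigma_i ,
\end{gather*}
where I have used the symmetry $c_{ij} = c_{ji}$ and $\tilde I(\mu_j^*,\sigma_i) = \int \tilde U^{\mu_j^*}\, d\sigma_i$. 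Now the variational condition \eqref{eq:varcond} enters: on $\supp(\mu_i^*)$ the integrand equals the constant $\ell_i$, and on all of $\Delta_i$ it is $\geq \ell_i$. Splitting $\sigma_i = \nu_i - \mu_i^*$, the $\mu_i^*$-part integrates the integrand (which is $\ell_i$ on $\supp(\mu_i^*)$) against $-\mu_i^*$ to give $-\ell_i m_i$, while the $\nu_i$-part integrates something $\geq \ell_i$ against the positive measure $\nu_i$ to give $\geq \ell_i m_i$; hence each summand $\int(\cdots)\, d\sigma_i \geq \ell_i m_i - \ell_i m_i = 0$, and the derivative is $\geq 0$. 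Combined with convexity, $\tilde J(\vec\nu) \geq \tilde J(\vec\mu^*)$, so $\vec\mu^*$ is a minimizer, and by the strict convexity already noted it is the unique one.

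The main obstacle is making the interchange of derivative and integral rigorous in the presence of the (possibly noncompact) supports and the logarithmic singularities — that is, justifying that $t \mapsto \tilde J(\vec\mu^{(t)})$ is differentiable at $t=0^+$ with the claimed derivative. The quadratic term is genuinely quadratic in $t$ once one knows all the relevant spherical energies $\tilde I(\mu_i^*)$, $\tilde I(\nu_i)$, and cross-energies are finite; finiteness of $\tilde I(\mu_i^*, \nu_j)$ for $i \neq j$ follows from Cauchy--Schwarz for the energy form (or from the explicit kernel), and for $i=j$ one uses that $\tilde I$ is the sum of a positive-semidefinite quadratic form plus a bounded-below linear term. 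The linear term $\int \tilde V_i \, d\mu_i^{(t)}$ is affine in $t$, and here the point to be careful about is that $\tilde V_i$ is only bounded below (not integrable a priori against $\nu_i$); but on the subset where $\tilde J(\vec\nu) < +\infty$ one has $\int \tilde V_i \, d\nu_i < +\infty$ as well, so the linear term is finite at both endpoints and hence affine and finite throughout $[0,1]$. Once these finiteness facts are in hand the derivative computation is routine; I would state them as a short preliminary observation and then run the convexity argument as above. This is exactly the $d=1$ Frostman-type argument (cf. \cite{SaTo97}), and the extension to $d \geq 2$ uses nothing beyond the symmetry and positive definiteness of $C$.
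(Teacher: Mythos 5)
Your proof is correct and takes essentially the same convexity route as the paper: the key step---integrating the variational conditions against $\nu_i$ and against $\mu_i^*$ and invoking convexity of $\tilde{J}$---is identical, you have merely repackaged the two integrations into a single integral against the signed measure $\sigma_i = \nu_i - \mu_i^*$. The only stylistic difference is that the paper works directly with the algebraic inequality $\tilde{J}_0(\vec{\mu}) + \tilde{J}_0(\vec{\nu}) \geq 2\sum_{i,j} c_{ij}\,\tilde{I}(\mu_i,\nu_j)$ rather than computing a one-sided derivative along the segment $(1-t)\vec{\mu}^* + t\vec{\nu}$, which sidesteps the interchange-of-limits concern you flag in your final paragraph, but the two formulations are equivalent.
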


\begin{proof}
	The functional $\tilde{J}$ is strictly convex on the set where
	it is f\/inite. The strict convexity is due to the quadratic part
	$\tilde{J}_0(\vec{\mu}) = \sum\limits_{i,j=1}^d c_{ij} \tilde{I}(\mu_i,\mu_j)$
	in the functional $\tilde{J}$ and it comes down to
	\begin{gather} \label{eq:varcond1}
		\tilde{J}_0(\vec{\mu}) + \tilde{J}_0(\vec{\nu}) \geq 2 \sum_{i,j=1}^d c_{ij} I(\mu_i,\nu_j),
		\end{gather}
whenever $\vec{\mu}, \vec{\nu} \in \prod\limits_{j=1}^d \mathcal M_{m_j}(\Delta_j)$, with strict inequality if $\vec{\mu} \neq \vec{\nu}$ and both $\tilde{J}(\vec{\mu}) < +\infty$, $\tilde{J}(\vec{\nu}) < +\infty$.
	
Let $\vec{\mu}^*$ be as in the lemma. Then for any $\vec{\mu} \in \prod\limits_{j=1}^d \mathcal M_{m_j}(\Delta_j)$, we f\/ind by integrating \eqref{eq:varcond} with respect to $\mu_i$,
	\begin{gather*} \int \left(2 \sum_{j=1}^d c_{ij} \tilde{U}^{\mu_j^*} + \tilde{V}_i \right) d\mu_i
		\geq \ell_i m_i, \end{gather*}
and so by summing over $i$,
	\begin{gather} \label{eq:varcond2}
		2 \sum_{i,j=1}^d c_{ij} \tilde{I}(\mu_i, \mu_j^*) + \sum_{i=1}^d \int \tilde{V}_i d\mu_i
		\geq \sum_{i=1}^d \ell_i m_i.
		\end{gather}
	
	If we integrate \eqref{eq:varcond} with respect to $\mu_i^*$ and sum over $i$,
	we f\/ind an equality
	\begin{gather*} 2 \sum_{i,j=1}^d c_{ij} \tilde{I}(\mu_i^*, \mu_j^*) + \sum_{i=1}^d \int \tilde{V}_i d\mu_i^*
	= \sum_{i=1}^d \ell_i m_i, \end{gather*}
which means{\samepage
	\begin{gather} \label{eq:varcond3}
		\sum_{i=1}^d \ell_i m_i = \tilde{J}(\vec{\mu}^*) + \tilde{J}_0(\vec{\mu}^*),
		\end{gather}
and in particular $\tilde{J}(\vec{\mu}^*) < +\infty$.}
		
Now we use \eqref{eq:varcond1} with $\vec{\nu} = \vec{\mu}^*$, and combine it with \eqref{eq:varcond2} and \eqref{eq:varcond3} to f\/ind
	\begin{align*}
	 \tilde{J}(\vec{\mu}) & = \tilde{J}_0(\vec{\mu}) + \sum_{i=1}^d \int \tilde{V}_i d\mu_i   \geq
	 2 \int \sum_{i,j=1}^d c_{ij} I(\mu_i,\mu_j^*) - \tilde{J}_0(\vec{\mu}^*)
		 + \sum_{i=1}^d \int \tilde{V}_i d\mu_i \\
	 & \geq \sum_{i=1}^d \ell_i m_i - \tilde{J}_0(\vec{\mu}^*)   = \tilde{J}(\vec{\mu}^*).
	\end{align*}
Thus $\vec{\mu}^*$ is indeed the minimizer of $\tilde{J}$.	
\end{proof}

\subsection{Nikishin interaction and balayage}\label{section2.3}

In the present paper we are dealing with the interaction matrix $C = (c_{ij})$ where
\begin{gather} \label{eq:Nikishin}
	c_{ij} = \begin{cases} 1, & \text{if } i = j, \\
	-\frac{1}{2}, & \text{if } |i-j| = 1, \\
	0, & \text{otherwise}, \end{cases}
	\end{gather}
which is indeed a positive def\/inite matrix, and sets $\Delta_j$ that alternate between the positive and negative real axis as in~\eqref{eq:Deltaj}. The interaction matrix \eqref{eq:Nikishin} is characteristic for Nikishin systems in the theory of Hermite--Pad\'e approximation~\cite{NikSor91}. See also \cite{AptKui11, Kui10} for surveys on the connections with Hermite-Pad\'e approximation and random matrix theory.

In Theorem \ref{thm12} we have $d=q+r-1$ and we use indices $i=-q+1, \ldots, r-1$, instead of $i=1,\ldots, d$. The total masses $m_j$ are as in
condition~ii) of Theorem~\ref{thm12}. The external f\/ields are $V_0 = \widehat{V}$ and $V_i \equiv 0$ for $i \neq 0$. Then it is easy to see that the conditions (a), (b), (c), and (d) stated in Section~\ref{section2.1} are satisf\/ied. In fact we have~\eqref{eq:liminfVi}, since
\begin{gather*} \label{eq:masssums}
	\sum_{j=-q+1}^{r-1} c_{ij} m_j = -\frac{1}{2} m_{i-1} + m_i - \frac{1}{2} m_{i+1}
	 = \begin{cases} 0, & \text{for } i \neq 0, \\
	 \frac{q^{-1} + r^{-1}}{2}, & \text{for } i = 0,
	 \end{cases} \end{gather*}
and then \eqref{eq:liminfVi} is satisf\/ied because of~\eqref{eq:tildeVgrowth}.

Hence there exists a unique minimizing vector of measures
$(\nu_{-q+1}^*, \ldots, \nu_{r-1}^*)$ for the energy functional
$\tilde{J}$ with
\begin{gather*} \tilde{V}(x) = \widehat{V}(x) - \frac{q^{-1} + r^{-1}}{2} \log\big(1+|x|^2\big), \end{gather*}
see \eqref{eq:tildeVi}. We have to show that in addition $I(\nu_i^*) < +\infty$ for all~$i$, and then we can conclude that it is also a minimizing vector for $J$.

The variational conditions \eqref{eq:varcond} from Lemma \ref{lem21} are in this case
\begin{gather} \label{eq:varcond4}
	2 \tilde{U}^{\nu_0^*} - \tilde{U}^{\nu_{-1}^*} - \tilde{U}^{\nu_{1}^*}
	+ \tilde{V} \ \begin{cases} = \ell_0 & \text{on } \supp(\nu_0^*), \\
		\geq \ell_0 & \text{on } [0,\infty), \end{cases}
		\end{gather}
and for $j \in \{-q+1, \ldots, r-1\} \setminus \{0\}$,{\samepage
\begin{gather} \label{eq:varcond5}
	2 \tilde{U}^{\nu_j^*} - \tilde{U}^{\nu_{j-1}^*} - \tilde{U}^{\nu_{j+1}^*}
			\ \begin{cases} = \ell_j & \text{on } \supp(\nu_j^*), \\
				\geq \ell_j & \text{on } \Delta_j. \end{cases}
			\end{gather}
Here we set $\nu_{-q}^* = \nu_{r}^*=0$ so that \eqref{eq:varcond4},~\eqref{eq:varcond5} also hold for $j=-q+1$ and $j=r-1$.}

Note that $2 \nu_j^*$ and $\nu_{j-1}^* + \nu_{j+1}^*$ have the same total masses if $j \neq 0$. If~\eqref{eq:varcond5} holds then $2\nu_j^*$ is the
\textit{balayage measure} of $\nu_{j-1}^* + \nu_{j+1}^*$ onto $\Delta_j$. The balayage measure has full support $\Delta_j$, and equality holds in~\eqref{eq:varcond5} everywhere on~$\Delta_j$. In addition, the constant $\ell_j$ is zero. So for $j \neq 0$ the variational condition is
\begin{gather} \label{eq:varcond6}
2 \tilde{U}^{\nu_j^*} - \tilde{U}^{\nu_{j-1}^*} - \tilde{U}^{\nu_{j+1}^*} = 0 \qquad \text{on} \ \ \Delta_j.
	\end{gather}
	
If $x \mapsto \log(1+|x|^2)$ is integrable with respect to all measures, then \eqref{eq:varcond4} reduces to
\begin{gather} \label{eq:varcond8}
2 U^{\nu_0^*} - U^{\nu_{-1}^*} - U^{\nu_{1}^*}
+ \widehat{V} \ \begin{cases} = \ell & \text{on } \supp(\nu_0^*), \\
\geq \ell & \text{on } [0,\infty), \end{cases}
\end{gather}
for some constant $\ell$, while \eqref{eq:varcond6} reduces to
\begin{gather} \label{eq:varcond7} 2 U^{\nu_j^*} = U^{\nu_{j-1}^*} + U^{\nu_{j+1}^*}
	\qquad \text{on} \ \ \Delta_j,
	\end{gather}
which is the more common form for the balayage in logarithmic potential theory, see \cite{SaTo97} where the discussion however is restricted to measures with compact support in~$\mathbb C$.

Our strategy to prove Theorem \ref{thm12} will be to establish the existence of a vector of measures $\vec{\nu}^* = (\nu_{-q+1}^*, \ldots, \nu_{r-1}^*)$ with $\supp(\nu_j^*) \subset \Delta_j$, $\nu_j^*(\Delta_j) = m_j$, such that the conditions~\eqref{eq:varcond7} and~\eqref{eq:varcond8} are satisf\/ied. The measure $\nu_0^*$ will have compact support, and all other measures have full support. The density of $\nu_j^*$ will behave like
\begin{gather*} \frac{d\nu_j^*(x)}{dx} =
	\begin{cases}
		O\big(|x|^{-1-1/q}\big) & \text{for } j \geq 1, \\
		O\big(|x|^{-1-1/r}\big) & \text{for } j \leq -1,
	\end{cases} \qquad \text{as} \ \ |x| \to \infty. \end{gather*}

\section{Proof of Theorem \ref{thm12}}\label{section3}

\subsection{An auxiliary result}\label{section3.1}

The following is our main auxiliary result.

\begin{Proposition} \label{prop21}
	Let $r \geq 2$ be an integer and let $a > 0$. Then there is a unique vector $(\mu_1^*, \ldots, \mu_{r-1}^*)$ of measures that minimizes the energy functional
	\begin{gather} \label{eq:CRfunctional3}
	\sum_{j=1}^{r-1} I(\mu_j) - \sum_{j=1}^{r-2} I(\mu_j, \mu_{j+1}) + \int \log |x-a| d \mu_1(x)
	\end{gather}
	among all vectors of measures satisfying for every $j=1, \ldots, r-1$,
	\begin{enumerate}\itemsep=0pt
		\item[$i)$] $\supp(\mu_j) \subset \Delta_j$,
		\item[$ii)$] $\mu_j(\Delta_j) = 1 - \frac{j}{r}$, and
		\item[$iii)$] $I(\mu_j) < +\infty$.
	\end{enumerate}
	
	Moreover,
	\begin{gather} \label{eq:CRfunctional3minimum}
	\sum_{j=1}^{r-1} I(\mu_j^*) - \sum_{j=1}^{r-2} I(\mu_j^*, \mu_{j+1}^*) 	= - \frac{1}{2} \int \log|x-a| d\mu_1^*(x)
	\end{gather}
	and
	\begin{gather} \label{eq:Umu1}
		 U^{\mu_1^*}(z) = \log \left| \frac{z^{1/r} - a^{1/r}}{z-a} \right|,
	\qquad z \in \mathbb C. \end{gather}
\end{Proposition}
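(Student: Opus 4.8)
The plan is to recognize the functional in \eqref{eq:CRfunctional3} as a weakly admissible vector equilibrium problem in the sense of Section~\ref{section2.1}, so that existence and uniqueness of the minimizer are automatic, and then to exhibit an explicit candidate satisfying the variational conditions of Lemma~\ref{lem21}. First I would set $d = r-1$ with indices $j = 1,\ldots, r-1$, interaction matrix $C$ the Nikishin matrix \eqref{eq:Nikishin} restricted to these indices, external fields $V_1(x) = \log|x-a|$ on $\Delta_1 = [0,\infty)$ and $V_j \equiv 0$ for $j \geq 2$, and masses $m_j = 1 - j/r$. One checks $\sum_j c_{ij} m_j = 0$ for all $i$ (a telescoping computation, using $m_0 = 1$, $m_r = 0$ as virtual endpoints), so the growth condition \eqref{eq:liminfVi} holds trivially since the only nonzero field $\log|x-a|$ grows slower than $\log(1+|x|^2)$. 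Positive definiteness of the truncated Nikishin matrix is standard. Hence \cite[Corollary~2.7]{HaKu12} gives a unique minimizer $\vec\mu^*$ of the regularized functional $\tilde J$; to conclude it minimizes the original $J$ (equivalently \eqref{eq:CRfunctional3}) one must separately verify $I(\mu_j^*) < +\infty$, which will follow from the explicit construction.

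The heart of the proof is constructing the candidate. Since all external fields except on $\Delta_1$ vanish and $\sum_j c_{ij} m_j = 0$, the variational conditions split: for $j \geq 2$ we need the balayage relations $2U^{\mu_j^*} = U^{\mu_{j-1}^*} + U^{\mu_{j+1}^*}$ on $\Delta_j$ (with $\mu_r^* = 0$), i.e.\ $2\mu_j^*$ is the balayage of $\mu_{j-1}^* + \mu_{j+1}^*$ onto $\Delta_j$, while for $j=1$ we need $2U^{\mu_1^*} - U^{\mu_2^*} + \log|x-a| = \ell$ on $\supp(\mu_1^*)$ and $\geq \ell$ on $[0,\infty)$. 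The guess, guided by \eqref{eq:Umu1}, is that $\mu_1^*$ is the measure with potential $U^{\mu_1^*}(z) = \log\bigl|(z^{1/r} - a^{1/r})/(z-a)\bigr|$; I would then define $\mu_j^*$ for $j \geq 2$ recursively (or all at once) via a scaling/power map. Concretely, the map $z \mapsto z^{1/r}$ suggests that the whole system is the pullback under $x \mapsto x^r$ of a configuration on a star with $r$ rays, or equivalently that $U^{\mu_j^*}$ can be written down explicitly in terms of branches of $z^{1/r}$; one natural way is to take $\mu_j^*$ to be (a constant multiple of) the balayage of the point mass $\delta_a$ (or rather of $\mu_1^*$'s natural "source") onto $\Delta_j$. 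I would verify the two defining properties of $\mu_1^*$: that $U^{\mu_1^*}$ is indeed the logarithmic potential of a positive measure on $[0,\infty)$ of mass $1 - 1/r$ (check the behavior at $0$, at $\infty$, and the jump of the imaginary part of the analytic completion across $[0,\infty)$, which gives the density and its positivity), and that it has finite logarithmic energy (the density will be bounded away from the endpoint singularities except for an integrable $x^{-1-1/r}$ tail and an integrable singularity at $a$, if any).

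The main obstacle I anticipate is producing the explicit chain $\mu_2^*,\ldots,\mu_{r-1}^*$ and checking that the full system of variational conditions \eqref{eq:varcond6}/\eqref{eq:varcond4}-type relations closes up consistently — in particular that the balayage relations for $2 \leq j \leq r-1$ and the field-equation for $j=1$ hold \emph{simultaneously} with the correct masses $m_j = 1 - j/r$ at every level, and that each $\mu_j^*$ is a genuine positive measure with $I(\mu_j^*) < +\infty$ and the claimed $|x|^{-1-1/r}$ decay. I expect this to come down to a clean computation with the function $z^{1/r}$: writing $w = z^{1/r}$ maps the union $\bigcup_j \Delta_j$ to a configuration of $r$ equally-spaced rays through the origin, under which the Nikishin/balayage structure becomes a reflection principle, and the potentials $U^{\mu_j^*}$ become explicit piecewise-linear-in-$\log$ expressions. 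Once the candidate is in hand, Lemma~\ref{lem21} finishes uniqueness-as-minimizer, and \eqref{eq:CRfunctional3minimum} follows by the standard trick of integrating the $j=1$ variational equality against $\mu_1^*$ and the $j \geq 2$ equalities against $\mu_j^*$, summing, and using $\sum_j c_{ij} m_j = 0$ to see that $\sum_j \ell_j m_j$ reduces to $\ell_1 m_1$ with $\ell_1$ computable from the explicit potential (one evaluates $2U^{\mu_1^*} - U^{\mu_2^*} + \log|x-a|$ at a convenient point, e.g.\ $x \to \infty$ or $x = 0$), yielding the factor $-\tfrac12$.
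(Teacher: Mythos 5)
Your plan follows the same strategy as the paper --- recognize \eqref{eq:CRfunctional3} as a weakly admissible vector equilibrium problem so that existence and uniqueness come from \cite[Corollary~2.7]{HaKu12}, then exhibit an explicit candidate and verify the variational conditions of Lemma~\ref{lem21} --- and the Riemann-surface-for-$z^{1/r}$ intuition you point to is exactly the geometry the paper uses. But you correctly identify the main obstacle, and the proposal does not actually resolve it. The candidate is not obtained by "iterated balayage of $\delta_a$": the balayage relations $2\mu_j^* = \Bal(\mu_{j-1}^*+\mu_{j+1}^*,\Delta_j)$ (with $\mu_0^*=\delta_a$, $\mu_r^*=0$) form a coupled system, not a sequential one, so one needs a device that produces the whole chain simultaneously. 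The paper's device is the single rational function
\begin{gather*}
\Psi(w) = \frac{1}{r\,w^{r-1}\big(w - a^{1/r}\big)}, \qquad z = w^r,
\end{gather*}
viewed as a meromorphic function on the genus-zero Riemann surface of $w=z^{1/r}$ with $r$ sheets glued along $\Delta_1,\ldots,\Delta_{r-1}$; the density of $\mu_j^*$ is the jump $\tfrac{1}{2\pi i}(\Psi_{j,+}-\Psi_{j,-})$ of $\Psi$ across the cut $\Delta_j$. From this one recipe the positivity of the densities, the total masses $1-j/r$ (via contour deformation and the expansion $\Psi_j(z)=\tfrac{1}{rz}+O(z^{-1-1/r})$), the $|x|^{-1-1/r}$ decay and hence $I(\mu_j^*)<\infty$, the Cauchy-transform jump relations $F_{j,+}+F_{j,-}=F_{j-1}+F_{j+1}$ on $\Delta_j$, and by integration the potential identities $2U^{\mu_j^*}=U^{\mu_{j-1}^*}+U^{\mu_{j+1}^*}$ on $\Delta_j$ all follow. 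This is the concrete content that your proposal leaves as a "clean computation to be done"; without it the argument does not close.

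Two smaller points. First, the paper sidesteps your external field $V_1=\log|x-a|$ by replacing it with a virtual measure $\mu_0^*=\delta_a$ (since $\log|x-a|=-U^{\delta_a}$), which turns the $j=1$ variational condition into the same homogeneous balayage relation as the others, with $\ell_j=0$ for every $j$ because $\Delta_j$ is unbounded. Your route, keeping $V_1$ as a genuine field with the $(r-1)\times(r-1)$ matrix, makes $\sum_j c_{1j}m_j=\tfrac12\neq 0$ rather than $0$, so your growth check and the subsequent spherical-to-planar reduction require a little more care than the telescoping you invoke. Second, the description of how \eqref{eq:CRfunctional3minimum} emerges ("$\sum_j\ell_j m_j$ reduces to $\ell_1 m_1$ with $\ell_1$ computable") is not how the paper gets the factor $-\tfrac12$: all constants vanish, and the formula instead comes from integrating $2U^{\mu_j^*}=U^{\mu_{j-1}^*}+U^{\mu_{j+1}^*}$ against $\mu_j^*$, summing, and telescoping with the endpoint terms $\mu_0^*=\delta_a$ and $\mu_r^*=0$, which leaves exactly $\tfrac12 I(\delta_a,\mu_1^*)$ on the right. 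Finally, \eqref{eq:Umu1} is obtained directly by integrating $F_1(z)=\tfrac{1}{z-a}-\Psi_1(z)$; you would need this identity rather than a guess for $U^{\mu_1^*}$ in order to verify the density and mass claims.
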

\begin{proof}
The minimization of \eqref{eq:CRfunctional3} under the conditions i), ii), iii) is a weakly admissible vector equilibrium problem for $r-1$ measures with total masses $m_j = 1 - \frac{j}{r}$, $j=1, \ldots, r-1$. We set $\mu_0^* = \delta_a$, the Dirac point mass at $a > 0$ and $m_0 = 1$. Also $\mu_r^* = 0$ and $m_r = 0$. Then by the discussion in Section~\ref{section2}, there is a unique minimizer $(\mu_1^*, \ldots, \mu_{r-1}^*)$ for the extended functional. We are going to construct the measures $\mu_j^*$ explicitly. We show that these measures have densities with respect to Lebesgue measures that decay as $|x|^{-1 - 1/r}$ as $|x| \to \infty$, and that for $j=1, \ldots, r-1$,
	\begin{gather*} 2 U^{\mu_j^*} = U^{\mu_{j-1}^*} + U^{\mu_{j+1}^*}
	\qquad \text{on} \ \ \Delta_j. \end{gather*}
	This implies that also also
	\begin{gather*} 2 \tilde{U}^{\mu_j^*} = \tilde{U}^{\mu_{j-1}^*} + \tilde{U}^{\mu_{j+1}^*}
	\qquad \text{on} \ \ \Delta_j. \end{gather*}
Then by Lemma~\ref{lem21} it follows that $(\mu_1^*, \ldots, \mu_{r-1}^*)$ is the minimizer for the extended functional, and since the function $x \mapsto \log(1+|x|^2)$ is integrable with respect to each of the measures, it then also follows that it is the minimizer of~\eqref{eq:CRfunctional3}. We will see at the end of the proof that~\eqref{eq:CRfunctional3minimum} and~\eqref{eq:Umu1} hold as well.
	
We use a geometric construction based on the Riemann surface for the mapping $w = z^{1/r}$. The Riemann surface has $r$ sheets
	\begin{gather*}
		\mathcal R_j =
			\begin{cases} \mathbb C \setminus \Delta_1 = \mathbb C \setminus (-\infty,0],
				& \text{for } j = 1, \\
				\mathbb C \setminus (\Delta_{j-1} \cup \Delta_j) = \mathbb C \setminus \mathbb R, & \text{for } j=2,\ldots, r-1, \\
				\mathbb C \setminus \Delta_{r-1}, & \text{for } j= r,
				\end{cases}
	\end{gather*}
where $\mathcal R_j$ is connected to $\mathcal R_{j+1}$ along $\Delta_j$ for $j=1, \ldots, r-1$ in the usual crosswise manner. There is one point at inf\/inity that connects all $r$ sheets. Note that $\mathcal R_r = \mathbb C \setminus (-\infty,0]$ if $r$ is even, and $\mathcal R_r = \mathbb C \setminus [0,\infty)$ if $r$ is odd.
	
The Riemann surface has genus zero and $z = w^r$ is a rational parametrization of it. The rational function
	\begin{gather} \label{eq:Psi}
		\Psi(w) = \frac{1}{r w^{r-1}(w-a^{1/r})}, \qquad z = w^r,
		\end{gather}
is meromorphic on the Riemann surface with a simple pole at $z = a$, a pole of order $r-1$ at $z=0$, and a zero of order $r$ at $z=\infty$. We use $\Psi_j$ to denote its restriction to $\mathcal R_j$. Explicitly, we then have
	\begin{gather} \label{eq:Psi1}
		\Psi_1(z) = \frac{1}{r z^{1-1/r}(z^{1/r}-a^{1/r})},
			\qquad z \in \mathcal R_1 = \mathbb C \setminus (-\infty,0]
		\end{gather}
with the principal branch of the $r$th roots. On the other sheets we have the same formula but with dif\/ferent choices of $r$th roots in $z^{1-1/r}$ and $z^{1/r}$. We always use $a^{1/r} > 0$.
	
The cuts $\Delta_j$ are oriented from left to right, and we use $\Psi_{j,\pm}(s)$, $s \in \Delta_j$, to denote the limit of $\Psi_j(z)$ as $z \to s$ with $ \pm \operatorname{Im} z > 0$. Then we def\/ine for $j=1, \ldots, r-1$,
	\begin{gather} \label{eq:mujdensity}
		d\mu_j^*(s) = \frac{1}{2\pi i} (\Psi_{j,+}(s) - \Psi_{j,-}(s)) ds, \qquad s \in \Delta_j.
\end{gather}
This def\/ines a real measure on $\Delta_j$ since $\Psi_{j,-}(s) = \overline{\Psi_{j,+}(s)}$ for $s \in \Delta_j$, but a~priori it could be a~signed measure. Suppose the density vanishes at an interior point $s \in \Delta_j$. Then $\Psi_{j,\pm}(s)$ is real, which implies that $\Psi(w)$ is real for some $w \in \mathbb C$ with $w^r = s$. From the formula~\eqref{eq:Psi} for~$\Psi$ it then easily follows that~$w$ is real. However, if~$w$ would be real and positive then $w^r$ would be on~$[0,\infty)$ on the f\/irst sheet, and if~$w$ would be real and negative then $w^r$ would be on $\mathbb R \setminus \Delta_{r-1}$ on the $r$th sheet. Since $w^r = s$ is on one of the cuts, we have a contradiction and we see that the density~\eqref{eq:mujdensity} does not vanish at an interior, and therefore has a constant sign.
	
We compute the total masses by contour integration as in~\cite{DuKu08}. We consider $j=1$ f\/irst. Then by~\eqref{eq:mujdensity}
\begin{gather*} \int d\mu_j^*(s) = \frac{1}{2 \pi i}\int_{C} \Psi_1(s) ds, \end{gather*}
where $C$ is a contour that starts at $-\infty$ and follows the upper side of the cut $\Delta_1 = (-\infty,0]$, and goes back to $-\infty$ on the lower side of the cut. We deform the contour to a big circle $|s| = R$, and we pick up a residue condition from the pole at $s=a$. From~\eqref{eq:Psi1} we calculate the residue as
\begin{gather*} \lim_{z \to a} (z-a) \Psi_1(z) = \frac{1}{r a^{1-1/r}} \lim_{z \to a} \frac{z-a}{z^{1/r} - a^{1/r}} = 1.\end{gather*}
Hence
	\begin{gather*} \int d\mu_1^*(s) = 1 - \lim_{R\to \infty} \frac{1}{2\pi i} \int_{|s|=R}
		\Psi_1(s) ds \end{gather*}
with the circle $|s|=R$ oriented counterclockwise. Since for every $j=1, \ldots, r-1$,
	\begin{gather} \label{eq:Psiasymp}
		\Psi_j(z) = \frac{1}{rz} + O\big(z^{-1-1/r}\big) \qquad \text{as} \ \ z \to \infty,
		\end{gather}
which easily follows from \eqref{eq:Psi}, we f\/ind
	\begin{gather} \label{eq:mu1mass} \int d\mu_1^*(s) = 1 - \frac{1}{r}.
	\end{gather}
	
Now consider $2 \leq j \leq r-1$. Then by \eqref{eq:mujdensity} and the fact that $\Psi_{j-1,\pm} = \Psi_{j,\mp}$ on~$\Delta_{j-1}$,
	\begin{gather*} \int d\mu_{j-1}^*(s) - \int d\mu_{j}^*(s)
		= \frac{1}{2\pi i} \int_{\Delta_{j-1}} \left(-\Psi_{j,-}(s) - \Psi_{j,+}(s)\right) ds\\
\hphantom{\int d\mu_{j-1}^*(s) - \int d\mu_{j}^*(s) =}{}
- \frac{1}{2\pi i} \int_{\Delta_j} \left(\Psi_{j,+}(s)- \Psi_{j,-}(s) \right) ds \\
\hphantom{\int d\mu_{j-1}^*(s) - \int d\mu_{j}^*(s)}{} =
		\frac{1}{2\pi i} \int_{\mathbb R} \left(\Psi_{j,-}(s) - \Psi_{j,+}(s)\right) ds,
		\end{gather*}
since $\Delta_{j-1} \cup \Delta_j = \mathbb R$. Again by contour deformation this is
	\begin{gather*} \int d\mu_{j-1}^*(s) - \int d\mu_{j}^*(s) = \lim_{R\to \infty} \frac{1}{2\pi i} \int_{|s|=R} \Psi_j(s) ds = \frac{1}{r}, \end{gather*}
where we used \eqref{eq:Psiasymp}. Together with \eqref{eq:mu1mass} we conclude that
	\begin{gather*}	
\int d\mu_j^*(s) = 1 - \frac{j}{r}, \qquad j =1, \ldots, r-1.
	\end{gather*}
Since the total masses are positive, and the densities of the measures do not change sign, it now also follows that the measures are positive.
	
We introduce the Cauchy transforms of the measures
\begin{gather*} 
		F_j(z) = \int \frac{d\mu_j^*(s)}{z-s}, \qquad z \in \mathbb C \setminus \Delta_j.
\end{gather*}
Then by a similar contour integration argument, where now we pick up a residue contribution at $s=z$, while there is no contribution from inf\/inity, we get
\begin{gather}
F_1(z) = \frac{1}{2\pi i} \int_C \frac{\Psi_1(s)}{z-s} ds = \frac{1}{z-a} - \Psi_1(z), \qquad z \in \mathbb C \setminus (-\infty,0] \label{eq:F1z}
\end{gather}
	and for $j=2, \ldots, r-1$,
\begin{gather} \label{eq:Fjz}
		F_{j-1}(z) - F_j(z) = \frac{1}{2\pi i} \int_{\mathbb R} \frac{\Psi_{j,-}(s) - \Psi_{j,+}(s)}{z-s} ds = \Psi_j(z),\qquad z \in \mathbb C \setminus \mathbb R,
\end{gather}
	where $F_r(z) = 0$.
	The identity $\Psi_{j,+} = \Psi_{j+1,-}$ on $\Delta_j$ then leads to
	\begin{gather} \label{eq:Fjbalayage}
	 F_{j,+}(x) + F_{j,-}(x) = F_{j-1}(x) + F_{j+1}(x), \qquad x \in \Delta_j,
	\end{gather}	
for $j=2, \ldots, r-1$. By \eqref{eq:F1z} and \eqref{eq:Fjz}, the identity \eqref{eq:Fjbalayage} also holds for $j=1$, if we agree that
	\begin{gather*} F_0(z) = \frac{1}{z-a}. \end{gather*}
		
The measures have a density that decays like $|s|^{-1-1/r}$ as $|s| \to \infty$. This easily follows from the def\/initions~\eqref{eq:Psi} and~\eqref{eq:mujdensity}. Then $s \mapsto \log(1+s^2)$ is integrable for these measures, and the usual logarithmic potentials exist. By Sokhotskii--Plemelj formulas we have
	\begin{gather*} 2 \frac{d}{dx} U^{\mu_j^*}(x) = F_{j,+}(x) + F_{j,-}(x), \qquad x \in \Delta_j. \end{gather*}
Clearly also
	\begin{gather*} \frac{d}{dx} U^{\mu_{j\pm 1}^*}(x) = F_{j\pm 1}(x), \qquad x \in \Delta_j. \end{gather*}
Then by integrating \eqref{eq:Fjbalayage} we obtain
	\begin{gather} \label{eq:Ujbalayage}
	2 U^{\mu_j^*}(x) = U^{\mu_{j-1}^*}(x) + U^{\mu_{j+1}^*}(x), \qquad x \in \Delta_j.
	\end{gather}		
There is no constant of integration in \eqref{eq:Ujbalayage} since
	\begin{gather*} U^{\mu_i^*}(x) = (1-\tfrac{i}{r}) \log |x| + o(1) \qquad \text{as} \ \ |x| \to \infty, \end{gather*}
for each $i \in \{j-1, j,j+1\}$ and $\Delta_j$ is unbounded.
	
Thus we have reached the identity \eqref{eq:Ujbalayage} that we aimed for, as discussed in the beginning of the proof. It remains to verify~\eqref{eq:CRfunctional3minimum} and~\eqref{eq:Umu1}.
	
By \eqref{eq:Ujbalayage} we obtain for $j=1, \ldots, r-1$,
	\begin{gather*}
		I(\mu_j^*) = \int U^{\mu_j^*} d\mu_j^* =
			\frac{1}{2} \int \big( U^{\mu_{j-1}^*} + U^{\mu_{j+1}^*} \big) d\mu_j^*
		 = \frac{1}{2} \big(I(\mu_{j-1}^*, \mu_j^*) + I(\mu_j^*, \mu_{j+1}^*) \big),
	\end{gather*}
which implies
	\begin{gather*}
		\sum_{j=1}^{r-1} I(\mu_j^*) = \sum_{j=1}^{r-2} I(\mu_j^*, \mu_{j+1}^*) + \frac{1}{2}( I(\mu_0^*, \mu_1^*) + I(\mu_{r-1}^*, \mu_r^*) ).
	\end{gather*}
We recall that $\mu_0^* = \delta_a$ and $\mu_r^* = 0$ and we obtain{\samepage
	\begin{gather*}
	\sum_{j=1}^{r-1} I(\mu_j^*) - \sum_{j=1}^{r-2} I(\mu_j^*, \mu_{j+1}^*) = \frac{1}{2} I(\delta_a, \mu_1^*)
		= \frac{1}{2} \int \log \frac{1}{|x-a|} d\mu_1^*(x),
	\end{gather*}
which is the identity in \eqref{eq:CRfunctional3minimum}.}
	
Finally, we recall that by 	\eqref{eq:F1z} and \eqref{eq:Psi1}
	\begin{gather*} F_1(z) = \frac{1}{z-a} - \frac{1}{rz^{1-1/r}(z^{1/r} - a^{1/r})}, \qquad
		z \in \mathbb C \setminus (-\infty,0], \end{gather*}
which after integration leads to
	\begin{gather*} \int \log(z-s) d\mu_1^*(s) = \log(z-a) - \log\big(z^{1/r} - a^{1/r}\big). \end{gather*}
The constant of integration vanishes since both sides behave like $(1-1/r) \log z + o(1)$ as $z \to \infty$. Taking real parts we f\/ind~\eqref{eq:Umu1}.
	\end{proof}

We next extend Proposition \ref{prop21} from point masses $\delta_a$ with $a > 0$ to general measures with compact support on $(0,\infty)$.

\begin{Proposition} \label{prop22}
Let $r \geq 2$ be an integer. Let $\mu$ be a probability measure on $(0,\infty)$ with compact support. Then there is a unique vector $(\mu_1^*, \ldots, \mu_{r-1}^*)$ of measures that minimizes the energy functional
\begin{gather*} 
	 \sum_{j=1}^{r-1} I(\mu_j) - \sum_{j=1}^{r-2} I(\mu_j, \mu_{j+1}) - I(\mu,\mu_1)
\end{gather*}
among all vectors of measures satisfying for every $j=1, \ldots, r-1$,
\begin{enumerate}\itemsep=0pt
	\item[$i)$] $\supp(\mu_j) \subset \Delta_j$,
	\item[$ii)$] $\mu_j(\Delta_j) = 1 - \frac{j}{r}$, and
	\item[$iii)$] $I(\mu_j) < +\infty$.
\end{enumerate}

Moreover,
\begin{gather}
\sum_{j=1}^{r-1} I(\mu_j^*) - \sum_{j=1}^{r-2} I(\mu_j^*, \mu_{j+1}^*) - I(\mu,\mu_1^*)\nonumber\\
\qquad{} = - \frac{1}{2} I(\mu,\mu_1^*) = - \frac{1}{2} \iint \log \left| \frac{x^{1/r} - y^{1/r}}{x-y} \right| d \mu(x) d\mu(y).\label{eq:CRfunctional4minimum}
\end{gather}
\end{Proposition}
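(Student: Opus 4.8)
The plan is to reduce Proposition~\ref{prop22} to Proposition~\ref{prop21} by linearity and integration against $\mu$. The key observation is that the energy functional in Proposition~\ref{prop22} differs from that in Proposition~\ref{prop21} only in the external field term acting on $\mu_1$: in Proposition~\ref{prop21} it is $\int \log|x-a|\, d\mu_1(x) = -\int U^{\mu_1}(a)$ with a point mass at $a$, while in Proposition~\ref{prop22} it is $-I(\mu,\mu_1) = \int \log|x-y|\, d\mu(y)\, d\mu_1(x)$, which is exactly the average over $a$ (with respect to $d\mu(a)$) of the former. Since $\supp(\mu)$ is a compact subset of $(0,\infty)$, the construction and estimates from Proposition~\ref{prop21} are uniform in $a$ over that compact set.

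First I would define the candidate minimizing vector. For each $a > 0$, write $\mu_j^{*,a}$ for the measures constructed in the proof of Proposition~\ref{prop21} (via the rational function $\Psi$ on the $r$-sheeted Riemann surface for $w = z^{1/r}$, with pole depending on $a$). Then set
\begin{gather*}
	\mu_j^* = \int \mu_j^{*,a}\, d\mu(a), \qquad j = 1, \ldots, r-1,
\end{gather*}
i.e. $\mu_j^*(B) = \int \mu_j^{*,a}(B)\, d\mu(a)$ for Borel sets $B$. One checks $\supp(\mu_j^*) \subset \Delta_j$, that $\mu_j^*(\Delta_j) = \int (1 - \tfrac{j}{r})\, d\mu(a) = 1 - \tfrac{j}{r}$, and, using the uniform $|s|^{-1-1/r}$ decay of the densities $d\mu_j^{*,a}/ds$ over $a$ in a compact subset of $(0,\infty)$, that $\log(1+s^2)$ is $\mu_j^*$-integrable and $I(\mu_j^*) < +\infty$. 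The potentials satisfy $U^{\mu_j^*} = \int U^{\mu_j^{*,a}}\, d\mu(a)$ by Fubini, so integrating the balayage identity~\eqref{eq:Ujbalayage} (with $\mu_0^{*,a} = \delta_a$, $\mu_r^{*,a} = 0$) against $d\mu(a)$ gives, for $j = 1, \ldots, r-1$,
\begin{gather*}
	2 U^{\mu_j^*}(x) = U^{\mu_{j-1}^*}(x) + U^{\mu_{j+1}^*}(x), \qquad x \in \Delta_j,
\end{gather*}
where now $U^{\mu_0^*} = \int U^{\delta_a}\, d\mu(a) = U^{\mu}$, so that $\mu_0^* = \mu$ plays the role previously played by $\delta_a$. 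As in Proposition~\ref{prop21} this lifts to the spherical potentials, and Lemma~\ref{lem21} (with external field $V_1(x) = -U^\mu(x)$, which is lower semi-continuous and bounded below since $\supp(\mu)$ is compact and bounded away from $\Delta_1$) identifies this vector as the unique minimizer of the extended functional $\tilde J$; since $\log(1+|x|^2)$ is integrable for all measures, it is also the minimizer of the functional in Proposition~\ref{prop22}. The formula~\eqref{eq:CRfunctional4minimum} follows by the same telescoping computation as in Proposition~\ref{prop21}: using $2U^{\mu_j^*} = U^{\mu_{j-1}^*} + U^{\mu_{j+1}^*}$ on $\Delta_j$ one gets $\sum_{j=1}^{r-1} I(\mu_j^*) = \sum_{j=1}^{r-2} I(\mu_j^*,\mu_{j+1}^*) + \tfrac12 I(\mu_0^*,\mu_1^*)$ with $\mu_0^* = \mu$, $\mu_r^* = 0$, hence the left side of~\eqref{eq:CRfunctional4minimum} equals $-\tfrac12 I(\mu,\mu_1^*)$; and $-I(\mu,\mu_1^*) = \iint U^{\mu_1^*}(x)\, d\mu(x)$ with $U^{\mu_1^*}(x) = \int U^{\mu_1^{*,a}}(x)\, d\mu(a) = \int \log|(x^{1/r}-a^{1/r})/(x-a)|\, d\mu(a)$ by~\eqref{eq:Umu1}, giving the double integral on the right.

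The main obstacle I expect is the interchange of integration (in $a$) with the various potential-theoretic operations and with verifying finiteness of energies. Concretely: one must justify $U^{\mu_j^*} = \int U^{\mu_j^{*,a}}\, d\mu(a)$ (Tonelli is fine for the positive part $\log^-$, but the $\log^+$ part needs the uniform tail decay), and one must show $I(\mu_j^*) = \iint I(\mu_j^{*,a},\mu_j^{*,b})\, d\mu(a)\, d\mu(b) < +\infty$, which requires a uniform-in-$(a,b)$ bound on the mutual energies $I(\mu_j^{*,a},\mu_j^{*,b})$; this is where compactness of $\supp(\mu)$ inside $(0,\infty)$ is essential, since it keeps the poles of $\Psi$ bounded away from $0$ and gives uniform control of the densities both near the origin and at infinity. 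Once that uniformity is in hand, everything else is a routine transcription of the proof of Proposition~\ref{prop21}.
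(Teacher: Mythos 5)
Your proposal matches the paper's proof essentially step for step: define $\mu_j^* = \int \mu_j^{*,a}\, d\mu(a)$, verify the balayage identity by Fubini using the uniform $|x|^{-1-1/r}$ tail decay (which relies on $\supp(\mu)$ being compact in $(0,\infty)$), invoke Lemma~\ref{lem21} to identify the minimizer, and conclude via the same telescoping and equation~\eqref{eq:Umu1}. The only difference is that you flag the integration-interchange and finiteness-of-energy concerns more explicitly than the paper does; the paper handles these with the same uniformity argument but states them more tersely.
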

\begin{proof}
For $\mu = \delta_a$ this was done in Proposition \ref{prop21}.

Let $(\mu_1^*(a), \ldots, \mu_{r-1}^*(a))$ be the vector of measures that we obtain from $\delta_a$ as in Proposition~\ref{prop21}. Then for a general probability measure~$\mu$ on $(0,\infty)$ with compact support, we put
\begin{gather*} \mu_j^* = \int \mu_j^*(a) d\mu(a), \qquad j = 1, \ldots, r-1. \end{gather*}
These are well-def\/ined positive measures satisfying i), ii) and iii) of the proposition. The measures $\mu_j^*(a)$ have a density that decays
as $|x|^{-1-1/r}$ as $|x| \to \infty$, and the same will be true for the measures $\mu_j^*$ since $\mu$ is compactly supported. Thus the logarithmic potentials exist, and
	\begin{gather*} U^{\mu_j^*} = \int U^{\mu_j^*(a)} d\mu(a). \end{gather*}
The identity
\begin{gather*} 2 U^{\mu_j^*(a)} = U^{\mu_{j-1}^*(a)} + U^{\mu_{j+1}^*(a)} \qquad \text{on} \ \ \Delta_j \end{gather*}
holds for every $a > 0$ by Proposition~\ref{prop21}. Integrating this with respect to $a$ and using Fubini's theorem, we obtain
\begin{gather*} 2 U^{\mu_j^*} = U^{\mu_{j-1}^*} + U^{\mu_{j+1}^*} \qquad \text{on} \ \ \Delta_j. \end{gather*}
As in the proof of Proposition~\ref{prop21} this leads to the f\/irst identity of~\eqref{eq:CRfunctional4minimum}.

For $j=1$ we get
\begin{gather*} U^{\mu_1^*}(z) = \int U^{\mu_1^*(a)}(z) d\mu(a)
	= \int \log \left| \frac{z^{1/r} - a^{1/r}}{z-a} \right| d\mu(a),
	\end{gather*}
see \eqref{eq:Umu1}. Changing $z$ and $a$ to $x$ and $y$, and integrating over $d\mu(x)$, we obtain the second identity of~\eqref{eq:CRfunctional4minimum}.
\end{proof}

\section{Proofs of Theorems \ref{thm11} and \ref{thm12}}\label{section4}

Theorem \ref{thm11} is the special case $q=1$ of Theorem~\ref{thm12} and so it suf\/f\/ices to prove Theorem~\ref{thm12}.

\begin{proof}
Let $\nu_0$ be a probability measure on $(0,\infty)$ with compact support, and let $(\nu_{-q+1}, \ldots, \nu_{-1}$, $\nu_0, \nu_1, \ldots, \nu_{r-1})$ be the minimizing vector of measures for~\eqref{eq:VEproblem2} under the assumptions i), ii) and~iii) of Theorem~\ref{thm12}, with $\nu_0$ f\/ixed.
		
Then it is easy to see that $(\nu_1, \ldots, \nu_{r-1})$ is the	minimizer for the vector energy problem of Proposition~\ref{prop22} with $\mu = \nu_0$,	and hence by~\eqref{eq:CRfunctional4minimum},
	\begin{gather} \label{eq:Inuplus}
	\sum_{j=1}^{r-1} I(\nu_j) - \sum_{j=1}^{r-2} I(\nu_j, \nu_{j+1}) - I(\nu,\nu_1)
	= - \frac{1}{2} \iint \log \left| \frac{x^{1/r} - y^{1/r}}{x-y} \right| d\nu_0(x) d\nu_0(y). \end{gather}
	
Similarly, $(\nu_{-1}, \ldots, \nu_{-q+1})$ is the minimizer for the vector energy problem of Proposition~\ref{prop22} with $\mu= \nu_0$ and $q$ instead of~$r$. Thus by \eqref{eq:CRfunctional4minimum} again,
	\begin{gather} \label{eq:Inuminus}
	\sum_{j=-q+1}^{-1}\! I(\nu_j) - \sum_{j=-q+1}^{-2} \! I(\nu_j, \nu_{j+1}) - I(\nu_{-1},\nu_0)
	= - \frac{1}{2} \iint \!\log \left| \frac{x^{1/q} - y^{1/q}}{x-y} \right| d\nu_0(x) d\nu_0(y).\!\!\! \end{gather}
The identity also holds in case $q=1$, since then $\nu_{-1}^*=0$ and both sides of \eqref{eq:Inuminus} are~$0$.
	
From \eqref{eq:Inuplus} and \eqref{eq:Inuminus} we f\/ind that for a f\/ixed $\nu_0$, the minimum of the energy functional~\eqref{eq:VEproblem2} taken over all $\nu_j$, for $j=-q+1, -1$, $j=1, \ldots, r-1$ satsifying items~i),~ii) of Theorem~\ref{thm12}, is equal to
	\begin{gather*}
		J(\nu_{-q+1}, \ldots, \nu_0, \ldots, \nu_{r-1}) =
		I(\nu_0) - \frac{1}{2} \iint \log \left| \frac{x^{1/q} - y^{1/q}}{x-y} \right| d\nu_0(x) d\nu_0(y) \\
\hphantom{J(\nu_{-q+1}, \ldots, \nu_0, \ldots, \nu_{r-1}) =}{}
 - \frac{1}{2} \iint \log \left| \frac{x^{1/r} - y^{1/r}}{x-y} \right| d\nu_0(x) d\nu_0(y)
		 + \int \tilde{V}(x) d\nu_0(x), \end{gather*}
which reduces to \eqref{eq:CRfunctional2} with $\nu_0$ instead of $\nu$. Thus the component $\nu_0^*$ of the minimizer for the vector energy \eqref{eq:VEproblem} is also the minimizer of~\eqref{eq:CRfunctional2} over probability measures on $[0,\infty)$.
	
We f\/inally prove that $\nu_0^*$ has compact support. Let $\rho = \nu_{-1}^* + \nu_1^*$. Then $\nu_0^*$ is the minimizer of{\samepage
		\begin{gather} \label{eq:nu0compact1}
		\tilde{I}(\nu) - \tilde{I}(\nu,\rho) + \int \tilde{V} d \nu
		\end{gather}
among probability measures $\nu$ on $[0,\infty)$.}

\looseness=-1		
If $x \mapsto \log(1+|x|^2)$ would be integrable with respect to $\nu_0^*$ then it would also be the minimizer~of
		\begin{gather}
		I(\nu) - \tilde{I}(\nu,\rho) + \int \big(\tilde{V}(x) + \log \big(1+|x|^2\big)\big) d \nu(x) \nonumber\\
\qquad{} = I(\nu) + \int \left( \widehat{V}(x) - \int \log \frac{\sqrt{1+s^2}}{x-s} d\rho(s) \right) d\nu,\label{eq:nu0compact2}
		\end{gather}
which is a usual minimization problem for one measure with an external f\/ield
		\begin{gather*} \widehat{V}(x) + \int \log \frac{x-s}{\sqrt{1+s^2}} d\rho(s) \end{gather*}
that is continuous on $(0,\infty)$ (since $\widehat{V}$ is continuous and $\rho$ is a measure on $(-\infty,0]$).
		
It is easy to see that $x-s > \sqrt{1+s^2}$ for $x > 1$ and $s < 0$. Thus $\int \log \frac{x-s}{\sqrt{1+s^2}} d\rho(s) > 0$ for $x > 1$, and it follows from \eqref{eq:tildeVgrowth} that
		\begin{gather*} \lim_{x \to \infty} \left( \widehat{V}(x) + \int \log \frac{x-s}{\sqrt{1+s^2}} d\rho(s) - \log\big(1+x^2\big) \right) = +\infty, \end{gather*}
		which guarantees that~\eqref{eq:nu0compact2} has a minimizer with compact support. This minimizer also minimizes \eqref{eq:nu0compact1} and thus coincides with $\nu_0^*$ which thus has compact support.
		
Theorem \ref{thm12} is now fully proved.
\end{proof}

\section{A f\/inal remark}\label{section5}

We consider the minimization problem for \eqref{eq:CRfunctional} with $\theta = 1/r$. From Theorem~\ref{thm11} we obtain the following result that gives conditions that guarantee that the Cauchy transform of the minimizing measure is an algebraic function.

\begin{Proposition} \label{prop51} Let $ \theta = 1/r$ be rational and suppose the external field $V\colon [0,\infty) \to \mathbb R$ is such that $V'$ is a rational function. Let $\mu^*$ be the probability measure that minimizes~\eqref{eq:CRfunctional} among all probability measures on $[0,\infty)$ and suppose that $\mu^*$ is supported on a finite union of intervals Then its Cauchy transform
\begin{gather*} F(z) = \int \frac{d\mu^*(s)}{z-s} \end{gather*}
is the solution of an algebraic equation of degree $r+1$. That is, there exist rational func\-tions~$c_j(z)$ for $j=0, \ldots, r$,
such that
\begin{gather*} 
\sum_{j=0}^{r} c_j(z) F(z)^j + F(z)^{r+1}= 0, \qquad z \in \mathbb C. \end{gather*}
\end{Proposition}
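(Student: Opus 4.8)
The plan is to exploit the vector equilibrium problem of Theorem~\ref{thm11} together with the balayage relations and the variational condition on $\nu_0^*$, and then mimic the standard algebraic-curve argument for one-cut (or multi-cut) equilibrium measures with rational external field. Write $\mu_0^* = \mu^*$ and let $(\mu_1^*, \ldots, \mu_{r-1}^*)$ be the remaining components of the minimizer from Theorem~\ref{thm11}. Introduce the Cauchy transforms $F_j(z) = \int \frac{d\mu_j^*(s)}{z-s}$, $z \in \mathbb{C} \setminus \Delta_j$, with $F = F_0$, and set $F_{-1} \equiv 0$, $F_r \equiv 0$. First I would record the two families of relations these functions satisfy. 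On $\Delta_j$ for $j = 1, \ldots, r-1$, the balayage condition~\eqref{eq:varcond7} differentiates (via Sokhotskii--Plemelj) to $F_{j,+}(x) + F_{j,-}(x) = F_{j-1}(x) + F_{j+1}(x)$, exactly as in~\eqref{eq:Fjbalayage} in the proof of Proposition~\ref{prop21}. On $\supp(\mu_0^*)$ the variational equality $2U^{\mu_0^*} - U^{\mu_1^*} + V = \ell$ differentiates to $2F_{0,+}(x) + 2F_{0,-}(x)$... more precisely $F_{0,+}(x) + F_{0,-}(x) = F_1(x) + V'(x)$ (with the sign convention $\frac{d}{dx}U^{\mu_0^*} = -\operatorname{Re} F_0$ handled carefully) on $\supp(\mu_0^*)$, where $V'$ is rational. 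Since $\mu^*$ is supported on finitely many intervals, $F_0$ has the standard one/multi-cut structure and these two boundary relations, together with the jump of $F_0$ across its support and the jumps of each $F_j$ across $\Delta_j$, govern everything.

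The key construction is a meromorphic function on the genus-zero Riemann surface $\mathcal{R}$ of $w = z^{1/r}$ whose sheets $\mathcal{R}_1, \ldots, \mathcal{R}_r$ are glued crosswise along $\Delta_1, \ldots, \Delta_{r-1}$, exactly as in the proof of Proposition~\ref{prop21}. Define the function $\xi$ on $\mathcal{R}$ sheetwise by
\begin{gather*}
\xi\big|_{\mathcal{R}_1} = V'(z) - F_0(z), \qquad \xi\big|_{\mathcal{R}_{j+1}} = F_{j-1}(z) - F_j(z) \quad (j = 1, \ldots, r-1),
\end{gather*}
interpreting $F_{-1} = 0$ so that $\xi|_{\mathcal{R}_1}$ and $\xi|_{\mathcal{R}_2} = F_0 - F_1$ differ by $V'$, and $\xi|_{\mathcal{R}_r} = F_{r-2} - F_{r-1}$. (Some care with which sheet carries $\Delta_0 := \supp \mu_0^*$ is needed: the cut of $F_0$ lies "between sheet $1$ and sheet $2$" in the sense that the relation $F_{0,+} + F_{0,-} = F_1 + V'$ precisely matches the crosswise gluing once $V'$ is subtracted, because $V'$ is analytic across $\supp\mu_0^*$.) The content of the first main step is to check that with these definitions the boundary values of $\xi$ match across every cut $\Delta_j$, $j = 0, 1, \ldots, r-1$: across $\Delta_j$ for $j \geq 1$ this is exactly~\eqref{eq:Fjbalayage}/\eqref{eq:varcond7}, and across $\Delta_0$ it is the variational relation for $\mu_0^*$ after subtracting the rational $V'$. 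Hence $\xi$ descends to a single-valued meromorphic function on $\mathcal{R}$ minus the (finitely many) poles of $V'$.

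Next I would analyze the poles of $\xi$. On $\mathcal{R}_1$ it has the poles of $V'$ (pulled back from the $z$-plane), which are finite in number and of finite order since $V'$ is rational; at $z = 0$ and $z = \infty$ there may be poles of bounded order coming from the branching and from the asymptotics $F_j(z) = \frac{m_j}{z} + O(z^{-2})$ (combined with the local behaviour of $V'$). On $\mathcal{R}_2, \ldots, \mathcal{R}_r$ the function $\xi$ is built from the $F_j$, which are analytic off $\Delta_j$ and decay at infinity, so it has no finite poles there except possibly at $z = 0$. Since $\mathcal{R}$ has genus zero with rational parameter $z = w^r$, every meromorphic function on it is a rational function of $w$, hence of the form (rational in $z$) $+$ (rational in $z$)$\cdot w$ $+ \cdots + $ (rational in $z$)$\cdot w^{r-1}$; equivalently, $\xi$ together with all its Galois conjugates $z \mapsto$ (other $r$th roots) satisfies a polynomial equation
\begin{gather*}
\xi^r + \sum_{k=0}^{r-1} a_k(z)\, \xi^k = 0
\end{gather*}
with $a_k$ rational. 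This is the standard fact that a meromorphic function on a genus-zero surface with a degree-$r$ projection to $\mathbb{P}^1$ is algebraic of degree dividing $r$ over $\mathbb{C}(z)$; here it is degree exactly $r$ because the $r$ sheet-restrictions of $\xi$ are generically distinct (they differ by the nonzero $V'$ and by the $F_j$). Finally, on $\mathcal{R}_1$ we have $\xi = V' - F$, so substituting $\xi = V'(z) - F$ into the degree-$r$ equation for $\xi$ and clearing denominators yields, after expanding and collecting powers of $F$, a polynomial relation
\begin{gather*}
F(z)^{r} + \sum_{j=0}^{r-1} c_j(z)\, F(z)^j = 0
\end{gather*}
wait—the degree comes out to $r$, not $r+1$, unless one works with the normalization that makes the leading coefficient vanish or includes an extra sheet; to get degree $r+1$ I would instead include the "$\infty$-sheet" structure, i.e. recall that the Riemann surface $w = z^{1/r}$ has $r$ sheets but the natural spectral curve for this Nikishin problem is the $(r+1)$-sheeted one obtained by also tracking $F_0$ separately from the relation on $\Delta_0$ — concretely, one works on the compact Riemann surface with sheets indexed $0, 1, \ldots, r$ where sheet $0$ carries $F_0$ itself (not $V' - F_0$) and the gluing along $\supp\mu_0^*$ between sheet $0$ and sheet $1$ is governed by $F_{0,\pm}$, with the poles of $V'$ appearing only on the finite sheets; this surface still has genus zero (it is a rational curve, being a chain of $\mathbb{P}^1$'s glued along intervals, with one point over $\infty$), its projection to the $z$-plane has degree $r+1$, and the function equal to $F_j$ on sheet $j$ (with $F_0 = F$ on sheet $0$, and incorporating $V'$ into the sheet-$0$/sheet-$1$ transition) is meromorphic on it, hence algebraic of degree $r+1$ over $\mathbb{C}(z)$, which is precisely the asserted equation with rational $c_j$.

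The main obstacle I anticipate is bookkeeping rather than conceptual: (i) getting the sheet structure and the placement of the cut $\supp\mu_0^*$ right so that $V'$ — which is not a potential of a measure but a rational function — is absorbed correctly into exactly one inter-sheet transition without breaking single-valuedness elsewhere; and (ii) verifying that the resulting global function has only finitely many poles, each of finite order, so that the genus-zero/rational-parametrization argument applies — this requires controlling the behaviour at $z = 0$ (where several $F_j$'s and $V'$ may be singular and where all sheets are branched) and at $z = \infty$ (where the $F_j$ decay but where all sheets come together), using the explicit decay rates $d\mu_j^*/dx = O(|x|^{-1-1/r})$ recorded in Section~\ref{section2.3}. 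Once those local checks are done, the passage "meromorphic on a rational curve $\Rightarrow$ algebraic of the appropriate degree $\Rightarrow$ algebraic equation for $F$ with rational coefficients" is immediate, and the degree is read off from the number of sheets, namely $r+1$.
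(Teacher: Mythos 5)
You have the right skeleton (differentiate the variational/balayage conditions, assemble the $F_j$'s into a single function on a Riemann surface, invoke algebraicity), and in fact your \emph{first} sheetwise definition $\xi = V' - F_0,\ F_0 - F_1,\ \ldots$ is essentially the paper's function $\Psi$. But you put it on the wrong surface, and your attempted correction replaces it with a function that does not work. The $r$-sheeted surface of $w = z^{1/r}$ has no cut along $\supp(\mu^*) \subset [0,\infty)$, yet the transition between $V'-F_0$ and $F_0-F_1$ is forced by the variational relation $F_{0,+}+F_{0,-} = F_1 + V'$, which lives on $\supp(\mu^*)$, not on $\Delta_1 = (-\infty,0]$; so $\xi$ does not extend across the cuts of that surface, and you have also lost the last sheet (which should carry $F_{r-1}-F_r = F_{r-1}$). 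The paper builds a different compact Riemann surface with $r+1$ sheets $\mathcal R_0,\ldots,\mathcal R_r$: $\mathcal R_0 = \mathbb C\setminus\supp(\mu^*)$ is glued to $\mathcal R_1$ along $\supp(\mu^*)$, and $\mathcal R_j$ is glued to $\mathcal R_{j+1}$ along $\Delta_j$ for $j\geq 1$. On that surface, $\Psi = V'-F_0$ on $\mathcal R_0$ and $\Psi = F_{j-1}-F_j$ on $\mathcal R_j$ is meromorphic by exactly the relations you recorded, and $V'-\Psi$ agrees with $F$ on $\mathcal R_0$, giving a degree-$(r+1)$ algebraic equation for $F$.

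Your "fix" — taking "the function equal to $F_j$ on sheet $j$" — fails at the crucial step: meromorphic continuation across $\Delta_j$ would require $F_{j,\pm} = F_{j+1,\mp}$, whereas what you actually have is the balayage relation $F_{j,+}+F_{j,-} = F_{j-1}+F_{j+1}$. The telescoping difference $F_{j-1}-F_j$ is exactly what converts the latter into a matching of boundary values, and you cannot dispense with it. A secondary issue: you invoke genus zero, but as the paper remarks at the end of Section~\ref{section5}, this surface has genus $0$ if and only if $\supp(\mu^*) = [0,a]$, while the proposition only assumes a finite union of intervals. Fortunately genus zero is irrelevant here: a meromorphic function on any \emph{compact} Riemann surface covering $\mathbb P^1$ with $r+1$ sheets satisfies a polynomial of degree $\leq r+1$ over $\mathbb C(z)$, and compactness is exactly what the finite-union-of-intervals hypothesis buys you. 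Drop the genus-zero claim and use compactness directly.
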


\begin{proof}
We turn to the vector equilibrium problem from Theorem~\ref{thm11} and denote the minimizing vector by $(\mu_0^*, \ldots, \mu_{r-1}^*)$ with $\mu_0^* = \mu^*$. The variational conditions are
\begin{gather*}
	2 U^{\mu^*}(x) - U^{\mu_1^*}(x) + V(x)
	\ \begin{cases} = \ell, & \text{for } x \in
	\supp(\mu^*), \\
	\geq \ell, & \text{for } x \in [0,\infty), \end{cases}
\end{gather*}
and for $j=1, \ldots, r-1$,
\begin{gather*}
	2 U^{\mu_j^*}(x) - U^{\mu_{j-1}^*}(x) - U^{\mu_{j+1}^*}(x) = 0 \qquad
	 \text{for} \ \ x \in \Delta_j,
	\end{gather*}
	where $\mu_r^* = 0$. We write
\begin{gather*} F_j(z) = \int \frac{d\mu_j^*(s)}{z-s}, 	\qquad j = 1, \ldots, r, \end{gather*}
and we f\/ind by dif\/ferentiating the variational conditions
\begin{alignat}{3}
	& F_{0,+}'(x) + F_{0,-}'(x) = F_1(x) + V'(x), \qquad && x \in \supp(\mu^*), & \nonumber\\
	& F_{j,+}'(x) + F_{j,-}'(x) = F_{j-1}(x) + F_{j+1}(x), \qquad && x \in \Delta_j,& \label{eq:Fvarcond}
\end{alignat}
for $j=1,\ldots, r-1$.

We construct a Riemann surface $\mathcal R$ with $r+1$ sheets $\mathcal R_j$, $j=0, \ldots, r$ given by
\begin{gather*}
	\mathcal R_0 = \mathbb C \setminus \supp(\mu^*), \\
	\mathcal R_1 = \mathbb C \setminus ((-\infty,0] \cup \supp(\mu^*)), \\
	\mathcal R_j = \mathbb C \setminus (\Delta_{j-1} \cup \Delta_j), \qquad \text{for} \ \ j = 2, \ldots, r-1, \\
	\mathcal R_r = \mathbb C \setminus \Delta_{r-1},
\end{gather*}
where $\mathcal R_0$ is connected to $\mathcal R_1$ along $\supp(\mu^*)$ and $\mathcal R_j$ is connected to $\mathcal R_{j+1}$ along $\Delta_j$ for $j=1, \ldots, r-1$ in the usual crosswise manner. After adding points at inf\/inity we obtain a compact Riemann surface, since $\supp(\mu^*)$ consists of a f\/inite union of intervals.

We def\/ine a function $\Psi$ on $\mathcal R$ by specifying on each of the sheets
\begin{gather*} 
 \Psi (z) = 	\begin{cases}
	V'(z) - F_0(z), & \text{for } z \in \mathcal R_0, \\
	F_{j-1}(z) - F_{j}(z), & \text{for } z \in \mathcal R_j, \quad j = 1, \ldots, r.
	\end{cases}
\end{gather*}
Then $\Psi$ is meromorphic on each of sheets (since $V'$ is a rational function). Moreover, the variational conditions~\eqref{eq:Fvarcond} tell us that $\Psi$ extends to a meromorphic function on the full Riemann surface~$\mathcal R$. Then also $V' - \Psi$ is a meromorphic function on~$\mathcal R$ which agrees with $F_0$ on the zero sheet. Therefore $F = F_0$ satisf\/ies an algebraic equation of degree $r+1$.
\end{proof}

Note that $\mu^*$ minimizes $I(\mu) + \int (V - U^{\mu_1^*}) d\mu$ among all probability measures $\mu$ on $[0,\infty)$, and the external f\/ield $V - U^{\mu_1^*}$ is real analytic on $(0,\infty)$. If it were also real analytic at~$0$, then it would follow from results in~\cite{DeKrMc99} that~$\mu^*$ is supported on a f\/inite union of intervals. Maybe the methods of~\cite{DeKrMc99} can be adapted to the present situation, and then the assumption in Proposition~\ref{prop51} about the f\/inite number of intervals would be unnecessary.

The Riemann surface in the proof of Proposition~\ref{prop51} has genus $0$ if and only if $\supp(\mu^*) = [0,a]$ for some $a > 0$. This is the case if $V(x)= x$, and for more general conditions see~\cite[Theorem~1.8]{ClaRom13}. We note that this Riemann surface also appears in the paper of Forrester, Liu and Zinn-Justin~\cite{FoLiZJ15}, see Fig.~1 in that paper.

\subsection*{Acknowledgements}

The author is supported by long term structural funding--Methusalem grant of the Flemish Go\-vernment, by KU Leuven Research Grant OT/12/073, by the Belgian Interuniversity Attraction Pole P07/18, and by FWO Flanders projects G.0934.13 and G.0864.16.

\pdfbookmark[1]{References}{ref}
\LastPageEnding

\end{document}